\documentclass{article}
\usepackage[utf8]{inputenc}

\usepackage{amsmath}
\usepackage{amssymb}



\usepackage{url}

\usepackage{graphicx}
\graphicspath{ {Graphics/} }
\usepackage{wrapfig}
\usepackage[export]{adjustbox}
\usepackage{float}

\usepackage[font=small]{caption}

\usepackage{amsthm}
\newtheorem{theorem}{Theorem}
\newtheorem*{theorem*}{Main Theorem}
\newtheorem{lemma}{Lemma}
\newtheorem{proposition}{Proposition}
\newtheorem*{remark}{Remark}





\title{\textbf{Small exotic 4-manifolds from lines and quadrics in $\mathbb{CP}^{2}$}}
\author{Stefan Mihajlović}
\date{}

\begin{document}

\maketitle

\begin{abstract}
    We construct potentially new manifolds homeomorphic but not diffeomorphic to $\mathbb{CP}^{2} \# 8 \overline{\mathbb{CP}^{2}}$ and $\mathbb{CP}^{2} \# 9 \overline{\mathbb{CP}^{2}}$ via rational blowdown surgery along certain $4$-valent plumbing graphs. This way all the graph classes from \cite{weighted} have a representative which admits a rational blowdown leading to an exotic manifold. We emphasize the simplicity of the constructions which boils down to finding a good configuration of complex lines and quadrics in $\mathbb{CP}^{2}$, and deciding which intersections to blow up.
    
\end{abstract}

\section{Introduction}
\label{section_introduction}

Smooth 4-manifold topology is a very intriguing field which has been transformed by several techniques and constructions in the past decades. Constructing different smooth structures on any given smoothable $4$-manifold is still not a trivial problem, and for many of them it is not known whether there are different smooth structures, let alone if there is an infinite number of smoothings.

The problem we will be focusing on in this paper is the construction of \emph{small} exotic $4$-manifolds, meaning manifolds with small Euler characteristic and signature, homeomorphic but not diffeomorphic to some standard $4$-manifolds. Donaldson first proved that a certain smooth $4$-manifold admits two different smooth structures \cite{donaldson1987}, by using his newly constructed invariants to distinguish Dolgachev surfaces which are homeomorphic to $\mathbb{CP}^{2} \# 9 \overline{\mathbb{CP}^{2}}$. Since then there were several papers providing increasingly more intricate constructions of even smaller exotic manifolds \cite{Kotschick1989, Park2005, exotic6, exotic5, exotic3, exotic2and4}. In this note we prove the following:

\begin{theorem*}
There exists a configuration of complex lines and quadrics in $\mathbb{CP}^{2}$, and graphs from classes $\mathcal{B}^{4}$ and $\mathcal{C}^{4}$ shown in Figure \ref{grafovi}, which can be used to produce exotic $\mathbb{CP}^{2} \# 8 \overline{\mathbb{CP}^{2}}$ and $\mathbb{CP}^{2} \# 9 \overline{\mathbb{CP}^{2}}$ via rational blowdowns.
\end{theorem*}

Examples of non-standard smooth structures on these manifolds were already known \cite{donaldson1987, Kotschick1989}, as well as the general technique we are using - the \textit{rational blowdown surgery} introduced by Fintushel and Stern \cite{blowdown}. In its most general form, this surgery technique replaces an adequate embedded plumbing with some rational homology ball, simplifying the topology in a controlled way. In our considerations all plumbings are neighbourhoods of spheres pairwise intersecting transversely in at most one point, and the plumbing graph is a tree.

The novelty is using particular plumbings from two graph classes $\mathcal{B}^{4}$ and $\mathcal{C}^{4}$ from \cite{weighted} shown in Figure \ref{grafovi}, previously unknown to produce exotic manifolds via rational blowdown. This way we show that each class of graphs from \cite{weighted} has a representative which admits a rational blowdown leading to an exotic manifold, which might eventually advance the understanding of smoothings of singularities discussed there.

\begin{figure}[H]
    \includegraphics[width=1\textwidth]{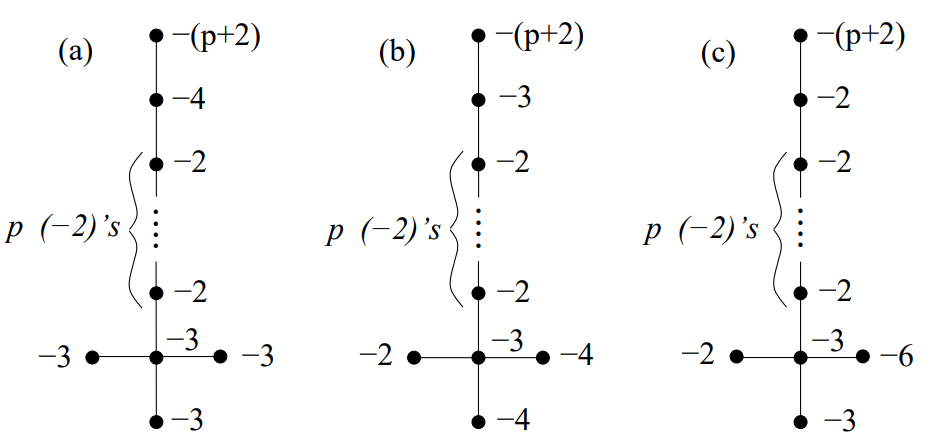}
    \centering
    \caption{Classes $\mathcal{A}^{4}$, $\mathcal{B}^{4}$ and $\mathcal{C}^{4}$}
    \label{grafovi}
\end{figure}

Here it is worth emphasizing that we are actually not looking at a pencil of curves, blowing it up, deforming the monodromies, and rationally blowing down. Rather, we start with a good configuration of degree $1$ and $2$ curves (complex lines and quadrics) in $\mathbb{CP}^{2}$ which are all already spheres by the genus-degree formula. Then we blow up some intersection points, and some additional generic points until we get a required configuration of intersecting spheres embedded in $\mathbb{CP}^{2}$ blown up some number of times. After rationally blowing down this configuration in a symplectic way, we determine the homeomorphism type and concisely show that the diffeomorphism type is not standard.

\vspace{0.2cm}

\noindent
\textbf{Acknowledgements:} I would like to thank my advisor Andr\'as Stipsicz for introducing me to smooth $4$-dimensional topology, pointing me to the problems discussed in this paper, and selflessly guiding me through my PhD journey.

\section{The curve configuration}
\label{section_configuration}

\noindent
The configuration of curves in $\mathbb{CP}^{2}$ that we start with is sketched in Figure \ref{curves} below. It will consist of two quadrics and four complex lines intersecting in a certain way, and it is derived by studying the configuration in the master thesis of Ta The Ahn \cite{master} where an example from class $\mathcal{A}^{4}$ was used in an exotic construction.

\begin{figure}[H]
    \includegraphics[width=1\textwidth]{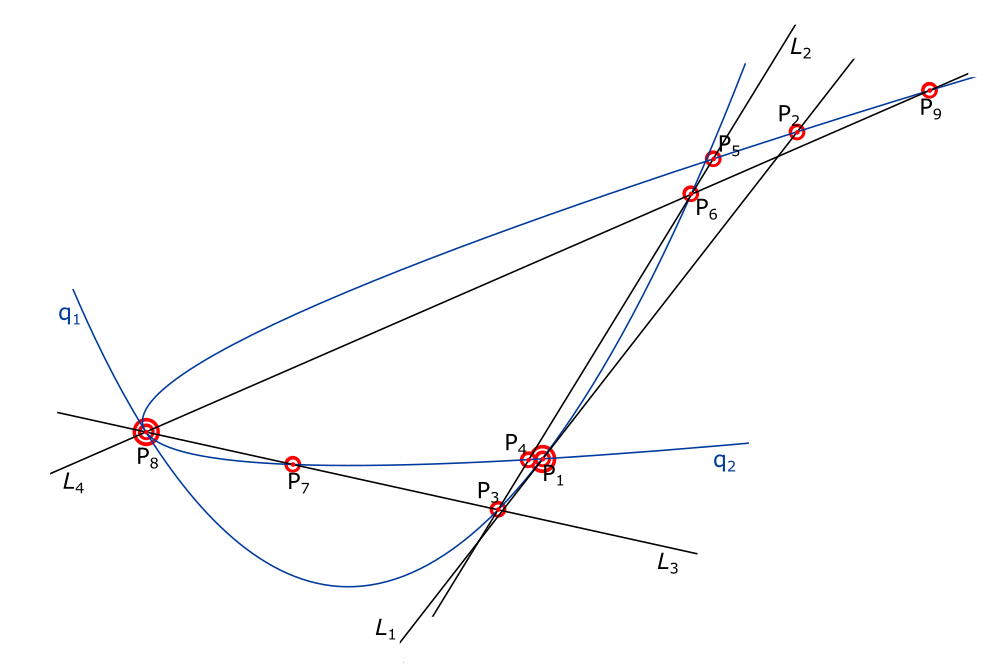}
    \centering
    \caption{Sketch of the curve configuration}
    \label{curves}
\end{figure}

First, take two irreducible quadrics $q_{1}$ and $q_{2}$ which are tangent at one point and have two more transverse intersections. We give an example of such two quadrics, defined in standard projective coordinates in $\mathbb{CP}^{2}$ by homogeneous degree $2$ equations:
$$ z_{1}^{2} + z_{2}^{2} + z_{3}^{2} = 0$$
$$ z_{1}z_{2} + 2\sqrt{2}i\cdot z_{2}z_{3} + z_{1}z_{3} = 0$$
Their common tangency is the point $[1 : \frac{\sqrt{2}}{2}i : \frac{\sqrt{2}}{2}i]$ which we further denote by $P_{8}$, and the two other intersections are $[-(1+\sqrt{3})\sqrt{2}i : -(2+\sqrt{3}) : 1]$ and $[-(1-\sqrt{3})\sqrt{2}i : -(2-\sqrt{3}) : 1]$. One general way to find two quadrics that intersect this way is by deforming equations of an irreducible quadric and a quadric consisting of a tangent to the irreducible one and a generic line.

After constructing $q_{1}$ and $q_{2}$, we take the tangent line to $q_{1}$ at one of the transverse intersection points with $q_{2}$, denote this point by $P_{1}$ and line by $L_{1}$. This tangent line intersects $q_{2}$ in another point, denote it $P_{2}$.
Now take a generic line $L_{2}$ which intersects $q_{1}$ in points we name $P_{3}$ and $P_{6}$, and intersects $q_{2}$ in $P_{4}$ and $P_{5}$. Denote by $L_{3}$ the line passing through $P_{8}$ and $P_{3}$, and by $L_{4}$ the line going through $P_{8}$ and $P_{6}$. The other intersections of $L_{3}$ and $L_{4}$ with $q_{2}$ are denoted by $P_{7}$ and $P_{9}$ respectively.

\section{Blowing up and the incidence graph}
\label{section_incidencegraph}

We blow up $\mathbb{CP}^{2}$ as shown in Figure \ref{curves}, starting from the point $P_{1}$ to $P_{9}$. One red circle around a point means one blow up and two circles mean we did two consecutive blow ups completely removing the intersections at the points of tangency. Exceptional curves $e_{1}$ and $e_{2}$ correspond to the point $P_{1}$, $e_{3}$ corresponds to $P_{2}$, and so on, $e_{9}$ and $e_{10}$ correspond to $P_{8}$, and $e_{11}$ to $P_{9}$.

In the process of blowing up a point, any curve passing through this point can be transformed in a certain way (see e.g. \cite{4-manifolds, WildWorld}), and the result is called the \textit{proper transform} of the curve. One effect is that proper transforms of the curves which intersect transversely in the point that is blown up, no longer intersect in that point. Another is that the homology class of the proper transform is the homology class of the initial curve minus the class of the exceptional curve. In our example, after the initial $11$ blow ups, the homology classes of proper transforms of the curves and their self-intersections are as follows:

\begin{center}
\begin{tabular}{ | c | c | }
 \hline
    $\widetilde{q_{1}} = q_{1} - e_{1} - e_{2} - e_{4} - e_{7} - e_{9} - e_{10}$ & $\widetilde{q_{1}} \cdot \widetilde{q_{1}} = -2$ \\ 
    $\widetilde{q_{2}} = q_{2} - e_{1} - e_{3} - e_{5} - e_{6} - e_{8} - e_{9} - e_{10} - e_{11}$ & $\widetilde{q_{2}} \cdot \widetilde{q_{2}} = -4$ \\  
    $\widetilde{L_{1}} = L_{1} - e_{1} - e_{2} - e_{3}$ & $\widetilde{L_{1}} \cdot \widetilde{L_{1}} = -2$ \\
    $\widetilde{L_{2}} = L_{2} - e_{4} - e_{5} - e_{6} - e_{7}$ & $\widetilde{L_{2}} \cdot \widetilde{L_{2}} = -3$ \\
    $\widetilde{L_{3}} = L_{3} - e_{4} - e_{8} - e_{9}$ & $\widetilde{L_{3}} \cdot \widetilde{L_{3}} = -2$ \\
    $\widetilde{L_{4}} = L_{4} - e_{7} - e_{9} - e_{11}$ & $\widetilde{L_{4}} \cdot \widetilde{L_{4}} = -2$ \\
 \hline
 \end{tabular}
\end{center}

\begin{center}
\small{Table 1: Homology classes and self-intersections of curves after $11$ blow ups}
\end{center}

We can now form the incidence graph of the new configuration by representing curves as vertices, with an edge connecting vertices if there is an intersection between those two curves, as shown in Figure \ref{incidenceGraph}.

Two different ways of further blowing up intersection points in this configuration eventually give embedded plumbings from classes $\mathcal{B}^{4}$ and $\mathcal{C}^{4}$ of $4$-valent graphs from \cite{weighted}, and this is shown in the beginnings of the next two sections. Then we use the fact that these plumbings admit rational blowdown surgeries, and that they can be done in a symplectic way. Finally, we find the homeomorphism types of the resulting manifolds, and prove that they are exotic. The Main Theorem stated in the introduction is comprised of Theorem \ref{Th1} in section \ref{section_example1} and Theorem \ref{Th2} in section \ref{section_example2}.

\begin{figure}[H]
    \includegraphics[width=1\textwidth]{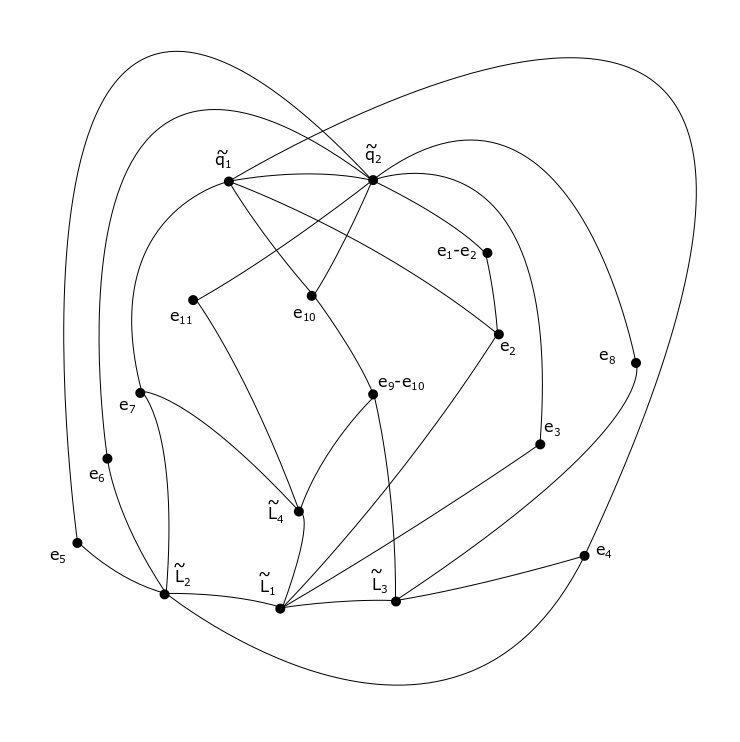}
    \centering
    \caption{The incidence graph of the curve configuration after $11$ blow ups}
    \label{incidenceGraph}
\end{figure}

\section{Exotic $\mathbb{CP}^{2} \# 8 \overline{\mathbb{CP}^{2}}$ via a graph from class $\mathcal{B}^{4}$}
\label{section_example1}

\vspace{0.2cm}

Start by Figure \ref{KlasaB} where we highlighted nodes and edges which will form the required subgraph. The homology classes of curves at this point are in Table 1. Blowing up the intersection of curves $\widetilde{q_{1}}$ and $e_{2}$, their self-intersections drop to $-3$ and $-2$, and we get a new exceptional sphere $e_{12}$. Doing the same with the intersection between $\widetilde{L_{2}}$ and $e_{4}$, their self-intersections drop to $-4$ and $-2$ and we get $e_{13}$. After three additional blow ups needed to achieve the self-intersections required for the rational blowdown surgery, we arrive to the subgraph shown in Figure \ref{graphB} which is of type $\mathcal{B}^{4}$ with $p=2$ using notation of Figure \ref{grafovi}: we can first blow up a generic point of $\widetilde{L_{1}}$, creating an exceptional curve $e_{14}$, and then two different generic points of $\widetilde{L_{4}}$, making two new exceptional curves $e_{15}$ and $e_{16}$.

\begin{figure}[H]
    \includegraphics[width=1\textwidth]{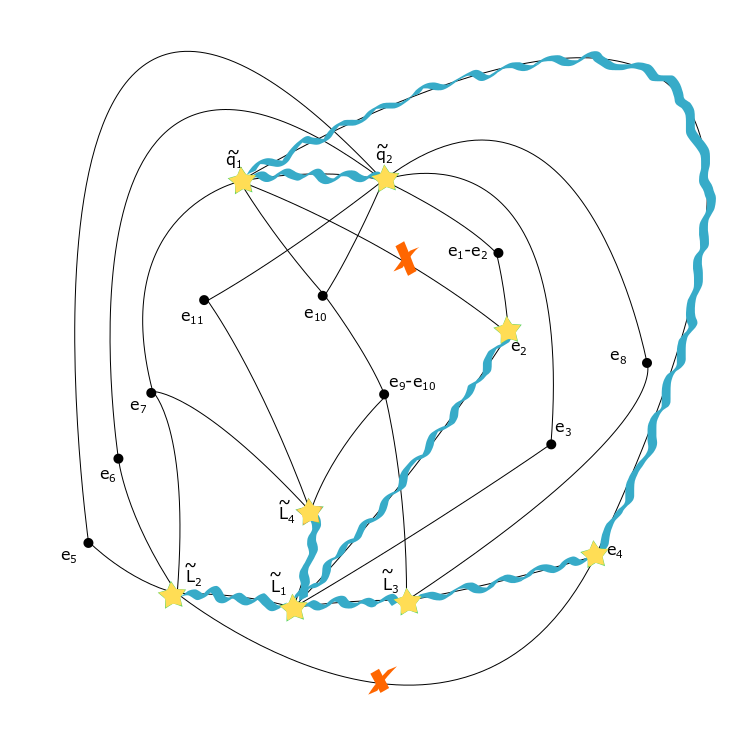}
    \centering
    \caption{Yellow stars are vertices and blue curly lines are edges which form a subgraph from class $\mathcal{B}^{4}$ presented in Figure \ref{graphB}. Orange X's show which $2$ intersections to blow up, whereas some additional blow ups used for adjusting the self-intersections to match the vertex markings in Figure \ref{graphB} are not visible here but described in the main text.}
    \label{KlasaB}
\end{figure}

Denote the final classes by $u_{1} = \widetilde{L_{2}}-e_{13}$, $u_{2} = \widetilde{L_{1}}-e_{14}$, $u_{3} = \widetilde{L_{4}}-e_{15}-e_{16}$, $u_{4} = e_{2}-e_{12}$, $u_{5} = \widetilde{L_{3}}$, $u_{6} = e_{4} - e_{13}$, $u_{7} = \widetilde{q_{1}}-e_{12}$ and $u_{8} = \widetilde{q_{2}}$. Therefore, after $16$ blow ups, we have the plumbing $P$ from Figure \ref{graphB} embedded in $\mathbb{CP}^{2} \# 16 \overline{\mathbb{CP}^{2}}$, and the homology classes of plumbing spheres are in Table 2:

\begin{center}
\begin{tabular}{|c|}
  \hline
  $u_{1} = h - e_{4} - e_{5} - e_{6} - e_{7} - e_{13}$ \\
  $u_{2} = h - e_{1} - e_{2} - e_{3} -e_{14}$   \\
  $u_{3} = h - e_{7} - e_{9} - e_{11} -e_{15}-e_{16}$ \\
  $u_{4} = e_{2}-e_{12}$ \\
  $u_{5} = h - e_{4} - e_{8} - e_{9}$ \\
  $u_{6} = e_{4} - e_{13}$ \\
  $u_{7} = 2h - e_{1} - e_{2} - e_{4} - e_{7} - e_{9} - e_{10} -e_{12}$ \\
  $u_{8} = 2h - e_{1} - e_{3} - e_{5} - e_{6} - e_{8} - e_{9} - e_{10} - e_{11}$ \\
  \hline
\end{tabular}
\end{center}

\begin{center}
\small{Table 2: Homology classes of spheres of the plumbing $P$}
\end{center}

\begin{figure}[H]
    \includegraphics[width=1\textwidth]{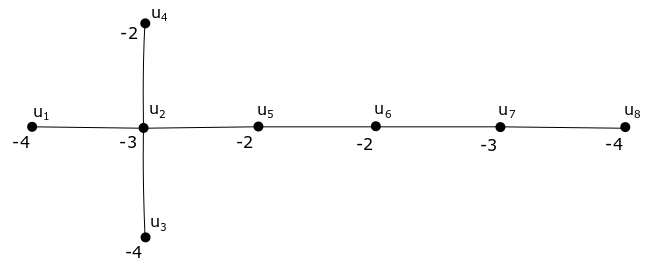}
    \centering
    \caption{Plumbing graph $P$ from class $\mathcal{B}^{4}$}
    \label{graphB}
\end{figure}

As our plumbing is from the class $\mathcal{B}^{4}$, by \cite[Theorem $1.6$]{weighted}, we can perform the rational blowdown along $P$ granting:
$$X = (\mathbb{CP}^{2} \# 16 \overline{\mathbb{CP}^{2}} - intP) \cup B$$
where $B$ is the rational homology ball smoothing of the normal surface singularity defined on pp. 1296-1297 of \cite{weighted} using results of \cite{rationalConstruct}.

An important point is that we can assume that the rational blowdown can be performed \textit{symplectically}, which follows from the main result of \cite{symplectic}. First, all the plumbing spheres of $P$ can be assumed to be symplectic submanifolds as proper transforms of complex submanifolds, and second, our plumbing graph is a negative definite tree \cite{weighted}. Then, from \cite[Theorem $1.1$]{symplectic}, the appropriate neighbourhood of the plumbing can be replaced by $B$ so that $(X, \omega_{X})$ is symplectic, and denoting $V = \mathbb{CP}^{2} \# 16 \overline{\mathbb{CP}^{2}} - intP$, there is a symplectomorphism $\phi_{V} : (V, \omega_{X}|_{V}) \longrightarrow (V, \omega|_{V})$, where $\omega$ is any symplectic structure on $\mathbb{CP}^{2} \# 16 \overline{\mathbb{CP}^{2}}$ that we started with.

Of course, this way we get a well-defined underlying smooth structure on the new manifold $X$. The main goal of this section is to prove the following:

\begin{theorem}
\label{Th1}
$X$ is homeomorphic but not diffeomorphic to $\mathbb{CP}^{2} \# 8 \overline{\mathbb{CP}^{2}}$.
\end{theorem}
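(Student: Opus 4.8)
The plan is to determine the homeomorphism type of $X$ from its Euler characteristic, signature and fundamental group together with Freedman's theorem, and then to use the symplectic structure $\omega_X$ to rule out the standard smooth structure.

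For the homeomorphism type, note first that the rational blowdown replaces the negative definite plumbing $P$ on $8$ spheres by the rational homology ball $B$, and $\partial P$ is a rational homology sphere. Novikov additivity applied to $\mathbb{CP}^{2}\#16\overline{\mathbb{CP}^{2}}=V\cup_{\partial P}P$ and then to $X=V\cup_{\partial P}B$ gives $\sigma(X)=\sigma(\mathbb{CP}^{2}\#16\overline{\mathbb{CP}^{2}})-\sigma(P)=-15-(-8)=-7$, and an Euler characteristic count (with $\chi(P)=9$, $\chi(B)=1$, $\chi(\partial P)=0$) gives $\chi(X)=11$; once $X$ is shown to be simply connected this forces $b_2^{+}(X)=1$, $b_2^{-}(X)=8$. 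To obtain $\pi_1(X)=1$ I would argue with van Kampen. Since $P$ is a plumbing on a tree of spheres, $\pi_1(P)=1$, so from $\mathbb{CP}^{2}\#16\overline{\mathbb{CP}^{2}}=V\cup_{\partial P}P$ the map $\pi_1(\partial P)\to\pi_1(V)$ is surjective, and $\pi_1(\partial P)$ is normally generated by the meridians $\mu_{u_1},\dots,\mu_{u_8}$ of the plumbing spheres. One then uses that several exceptional spheres are not part of the plumbing -- their classes $e_j$ do not occur among the $u_i$ of Table 2 -- and from that table each such sphere meets the spheres of $P$ transversally in a controlled way; removing these intersection disks from the exceptional spheres exhibits, one at a time, every meridian $\mu_{u_i}$ as null-homotopic in $V$, so $\pi_1(V)=1$. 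As the inclusion $\partial B\hookrightarrow B$ of the rational homology ball is $\pi_1$-surjective (a property of the smoothings of \cite{weighted,rationalConstruct}), van Kampen applied to $X=V\cup_{\partial P}B$ gives $\pi_1(X)=1$. Finally $\sigma(X)=-7$ is not divisible by $16$, so by Rokhlin's theorem the intersection form of $X$ is odd; the unique odd unimodular form of rank $9$ and signature $-7$ is $\langle 1\rangle\oplus 8\langle -1\rangle$, so Freedman's theorem yields a homeomorphism $X\cong\mathbb{CP}^{2}\#8\overline{\mathbb{CP}^{2}}$.

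For the non-diffeomorphism claim I would use that $(X,\omega_X)$ is a symplectic $4$-manifold with $b_1=0$, $b_2^{+}=1$ and $K_X^2=2\chi(X)+3\sigma(X)=1>0$. The decisive point, and what I expect to be the main obstacle, is to prove that $(X,\omega_X)$ is minimal. The approach is to compute the canonical class $K_X$ -- on $V$ it is the restriction of $-3h+e_1+\dots+e_{16}$ and it extends over $B$ -- and then to show that no class $E\in H_2(X;\mathbb{Z})$ with $E^2=K_X\cdot E=-1$ is represented by a symplectically embedded sphere. I see two natural ways to do this: a direct intersection-theoretic obstruction, using the homology classes of $\mathbb{CP}^{2}\#16\overline{\mathbb{CP}^{2}}$ that survive into $X$ after the blowdown to constrain such an $E$; or the Seiberg--Witten gluing formula for rational blowdowns of Fintushel--Stern \cite{blowdown} combined with Taubes' constraints, using the small-perturbation invariant, which is well defined here because $b_2^{+}(X)=1$ and $b_1(X)=0$. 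Everything else in the non-diffeomorphism argument is formal, so verifying minimality is the technical heart.

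Granting minimality, the conclusion is immediate. A minimal symplectic $4$-manifold with $b_1=0$ and $b_2^{+}=1$ that is rational or ruled must be rational (ruled surfaces over a curve of positive genus have $b_1>0$), hence diffeomorphic to $\mathbb{CP}^{2}$ or $S^2\times S^2$ and in particular has $b_2\le 2$; since $b_2(X)=9$, the manifold $X$ is neither rational nor ruled, so by the Li--Liu classification of minimal symplectic $4$-manifolds with $b_2^{+}=1$ its symplectic Kodaira dimension is non-negative (in fact $\kappa(X)=2$, as $K_X^2>0$). The symplectic Kodaira dimension is a diffeomorphism invariant, and $\mathbb{CP}^{2}\#8\overline{\mathbb{CP}^{2}}$ is rational and therefore has Kodaira dimension $-\infty$; hence $X$ is not diffeomorphic to $\mathbb{CP}^{2}\#8\overline{\mathbb{CP}^{2}}$, completing the proof.
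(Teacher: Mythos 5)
Your homeomorphism half is essentially the paper's argument: the same $\chi$, $\sigma$ and Novikov additivity computation, Rokhlin to get oddness, Freedman to conclude, and van Kampen to reduce everything to killing $\pi_{1}(\partial P)$ in $V$ and to the $\pi_1$-surjectivity of $\partial B\hookrightarrow B$. Your way of killing the meridians (using exceptional spheres that are not plumbing vertices, punctured along their intersections with the $u_i$'s) is a workable variant of the paper's argument, which instead uses the Seifert presentation of $\partial P$ and only two geometric inputs; note that your "one at a time" claim really is an iterative scheme (e.g.\ $e_{15}$ kills the meridian of $u_3$, $e_{14}$ that of $u_2$, then $e_{11}$ gives $u_8$, then $e_5$, $e_8$, $e_{13}$, $e_{10}$, $e_{12}$ give the rest), and it works because all these spheres are holomorphic, so homological intersection $0$ or $1$ with a plumbing sphere is the geometric intersection. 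That part is fine.

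The genuine gap is in the non-diffeomorphism half: your route runs through the symplectic Kodaira dimension and therefore needs $(X,\omega_X)$ to be \emph{minimal}, and you do not prove minimality -- you only name two possible strategies, neither of which is a proof as stated. A ``direct intersection-theoretic obstruction'' cannot do it, because symplectic non-minimality is not detected by homology: one must exclude symplectically embedded spheres in classes $E$ with $E^{2}=K_X\cdot E=-1$, which requires Taubes/Seiberg--Witten input; and the Fintushel--Stern gluing formula for rational blowdowns at $b_2^{+}=1$ comes with chamber and wall-crossing issues that your sketch does not address. The paper deliberately sidesteps exactly this point (it even remarks that $K_\omega\cdot\omega$ is only an invariant for minimal manifolds in general): it computes $K_X\cdot\omega_X=K\cdot\omega-K|_{P}\cdot\omega|_{P}$ via Mayer--Vietoris (using that $\partial P$ is a rational homology sphere) and the inverse of the plumbing intersection matrix, finds $K_X\cdot\omega_X>0$ for small $b_i$, and contrasts this with the fact that \emph{every} symplectic form $\overline{\omega}$ on $\mathbb{CP}^{2}\# m\overline{\mathbb{CP}^{2}}$, $2\le m\le 9$, satisfies $K_{\overline{\omega}}\cdot\overline{\omega}<0$ -- a consequence of the Li--Liu uniqueness theorem plus a light-cone/Cauchy--Schwarz argument -- so no minimality is ever needed. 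To complete your proposal you must either supply an actual minimality proof (the acknowledged ``technical heart'', and the hardest step in your plan) or replace that half by the explicit $K_X\cdot\omega_X$ computation as in the paper.
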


\begin{proof}
Propositions \ref{propHomeo1} and \ref{propDifeo1} in upcoming subsections prove the theorem.
\end{proof}

\subsection{The topology of $X$}

To find the homeomorphism type of $X$, we use the foundational result of Freedman \cite{freedman}, which along with Donaldson's theorem \cite{donaldson1983} implies that :

\vspace{0.2cm}

\noindent
\textit{Two \textit{smooth} simply connected $4$-manifolds are homeomorphic if and only if their Euler characteristics, signatures, and parity of the intersection forms are equal.}

\vspace{0.2cm}

\noindent
First we need to prove that $X$ is simply connected, and to do so we will have three standard applications of Van Kampen's theorem. The main part is to prove that for the inclusion $i: \partial P \hookrightarrow \mathbb{CP}^{2} \# 16 \overline{\mathbb{CP}^{2}} - intP$, the homomorphism $i_{*}$ induced on fundamental groups is a trivial map.

From \cite[Theorem 5.1]{seifert}, the boundary $\partial P $ is a Seifert fibered 3-manifold with a Seifert ivariant $\{ 0; (1, 3), (2, 1), (4, 1), (4, 1), (25, 18) \}$. Its fundamental group is described by \cite[Theorem 6.1]{seifertLectures} which implies:

\begin{lemma}
\label{prviSeifert}
$\pi_{1}(\partial P)$ has a presentation given by generators $ q_{0}, q_{1}, q_{2}, q_{3}, q_{4}, h$ and relations:
\begin{itemize}
    \item $q_{0}q_{1}q_{2}q_{3}q_{4} = 1$
    \item $[h, q_{i}] = 1$ for all $ i = 0,1,2,3,4$
    \item $q_{0}h^{3} = 1$, $q_{1}^{2}h = 1$, $q_{2}^{4}h = 1$, $q_{3}^{4}h = 1$, $q_{4}^{25}h^{18} = 1$
\end{itemize}
Furthermore, the classes of $q_{1}, q_{2}$ and $q_{3}$ can be chosen to be normal circles to spheres $u_{4}$, $u_{1}$ and $u_{3}$ respectively.
\end{lemma}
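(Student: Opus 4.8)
The plan is to derive the presentation directly from the standard description of the fundamental group of a Seifert fibered space over $S^2$, using the Seifert invariant $\{0; (1,3), (2,1), (4,1), (4,1), (25,18)\}$ provided by \cite[Theorem 5.1]{seifert} together with \cite[Theorem 6.1]{seifertLectures}. The general fact I would invoke is that for a Seifert fibration over $S^2$ with normalized invariants $(\alpha_i, \beta_i)$ and (here) zero Euler number contribution absorbed into the pairs, the fundamental group has generators $q_0, q_1, \dots, q_k$ (one for each exceptional/marked fiber, with $q_0$ carrying the $(1,3)$ piece) and the regular fiber class $h$, subject to: $h$ is central (the relations $[h, q_i] = 1$); each $q_i$ satisfies $q_i^{\alpha_i} h^{\beta_i} = 1$; and the product relation $q_0 q_1 q_2 q_3 q_4 = 1$. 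Plugging in $(\alpha_0,\beta_0) = (1,3)$, $(\alpha_1,\beta_1) = (2,1)$, $(\alpha_2,\beta_2) = (\alpha_3,\beta_3) = (4,1)$, $(\alpha_4,\beta_4) = (25,18)$ yields exactly the listed relations. This is essentially a transcription, so the first step is just to state the correspondence between the Seifert data and the Wirtinger-type presentation carefully enough that the reader can check it.

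The second, more substantive step is the last sentence: identifying $q_1, q_2, q_3$ with normal circles to the plumbing spheres $u_4, u_1, u_3$. Here I would use the standard dictionary between a negative-definite plumbing tree and the Seifert structure on its boundary: when the plumbing graph is star-shaped (or contains a distinguished central node), the central vertex's sphere meets the generic Seifert fiber, while each arm corresponds to an exceptional fiber whose multiplicity $\alpha_i$ is computed from the continued fraction of the weights along that arm, and the meridian of the leading vertex of that arm is isotopic in $\partial P$ to the $i$-th exceptional fiber $q_i$ (up to sign and powers of $h$, which does not affect the conclusion that the class can be "chosen" appropriately). So I would match the three arms of the graph $P$ in Figure \ref{graphB} whose continued-fraction invariants are $2$, $4$, and $4$ to the pairs $(2,1)$, $(4,1)$, $(4,1)$, read off from Figure \ref{graphB} which leaf sphere begins each of these arms — these turn out to be $u_4$ (a $-2$-sphere, giving $\alpha = 2$), $u_1$, and $u_3$ — and conclude that their meridians represent $q_1, q_2, q_3$ respectively. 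The arm with invariant $(1,3)$ is the "trivial" one giving $q_0$, and the long arm with invariant $(25,18)$ gives $q_4$.

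The main obstacle I anticipate is bookkeeping rather than conceptual: getting the orientation conventions and the normalization of the Seifert invariants consistent between \cite{seifert}, \cite{seifertLectures}, and the plumbing calculus, so that the continued fractions come out as $2, 4, 4, 25/18$ with the correct signs and so that the central-fiber/leaf-meridian correspondence is stated in a form that genuinely lets one choose $q_1, q_2, q_3$ to be the specified normal circles (as opposed to normal circles times some central power of $h$). I would handle this by working with the unnormalized plumbing presentation first — one generator per vertex, one relation per vertex expressing that the meridian of a sphere equals the product of the fiber-framing contributions of its neighbors — and then performing the standard reduction (eliminating the non-leaf arm generators via the chain relations) to land on the Seifert presentation, tracking the leaf meridians through the elimination. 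Since powers of the central element $h$ are harmless for the eventual application (showing $i_*$ is trivial via Van Kampen), I would phrase the identification up to such central factors, which is exactly what "can be chosen to be" signals.
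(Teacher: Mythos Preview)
Your proposal is correct and matches the paper's approach, which simply cites \cite[Theorem 5.1]{seifert} and \cite[Theorem 6.1]{seifertLectures} without giving any further argument; you have merely spelled out the standard plumbing-to-Seifert dictionary that those citations encode. Your continued-fraction matching of the arms ($u_4 \leftrightarrow 2/1$, $u_1,u_3 \leftrightarrow 4/1$, the chain $u_5u_6u_7u_8 \leftrightarrow 25/18$, central $u_2 \leftrightarrow (1,3)$) and your identification of the leaf meridians with the $q_i$ are exactly the content the paper leaves implicit.
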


\begin{lemma}
\label{trivial1}
$i_{*}(\pi_{1}(\partial P))$ is trivial.
\end{lemma}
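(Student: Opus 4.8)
The plan is to show that the homomorphism $i_*\colon \pi_1(\partial P)\to \pi_1(V)$, where $V = \mathbb{CP}^2 \# 16\overline{\mathbb{CP}^2} - int P$, kills every generator in the presentation of Lemma \ref{prviSeifert}, by exploiting the many spheres of $\mathbb{CP}^2 \# 16\overline{\mathbb{CP}^2}$ that do \emph{not} belong to the plumbing $P$. The only fact about such a sphere $S$ — an exceptional sphere of one of the $16$ blow ups, or the proper transform of one — that I need is this: since $P$ may be taken to be a small regular neighbourhood of $\bigcup_j u_j$, and $S$ is an image of a complex curve meeting each $u_j$ transversely and away from the nodes of $P$, the intersection $S\cap V$ is a planar surface whose boundary circles are normal circles (meridians) of the spheres $u_j$ that $S$ meets. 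Hence a sphere hitting exactly one $u_j$, once, exhibits the meridian $\mu_{u_j}$ as the boundary of a disk in $V$, so $i_*(\mu_{u_j})=1$; and a sphere hitting exactly two of them, once each, provides an annulus in $V$ showing that $i_*$ of one meridian is conjugate to $i_*$ of the other, so the vanishing of one forces the vanishing of the other.

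All the incidences I would invoke are one-line homological computations with the classes in Table 2. First, the exceptional sphere of class $e_{15}$ (from the last generic blow up, on $\widetilde{L_4}$) meets only $u_3$, once, and the exceptional sphere of class $e_{14}$ (from the generic blow up on $\widetilde{L_1}$) meets only $u_2$, once; hence $i_*(\mu_{u_3})=i_*(\mu_{u_2})=1$, and by the last sentence of Lemma \ref{prviSeifert} this gives $i_*(q_3)=1$. Next I would chain along the plumbing sphere $u_8 = \widetilde{q_2}$, which the quadric $q_2$ forces to be met by a great many exceptional spheres: the exceptional sphere over $P_2$ (class $e_3$) meets only $u_2$ and $u_8$; the exceptional sphere over $P_4$ (class $e_5$) meets only $u_1$ and $u_8$; and the proper transform of the first exceptional curve over $P_1$ (class $e_1-e_2$) meets only $u_4$ and $u_8$. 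The corresponding annuli give in turn $i_*(\mu_{u_8})=1$, then $i_*(\mu_{u_1})=1$ (hence $i_*(q_2)=1$), then $i_*(\mu_{u_4})=1$ (hence $i_*(q_1)=1$).

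It then remains to kill $h$, $q_0$ and $q_4$, and here the relations of Lemma \ref{prviSeifert} finish matters with no further geometry: from $q_1^2h=1$ we get $i_*(h)=i_*(q_1)^{-2}=1$; from $q_0h^3=1$ we then get $i_*(q_0)=1$; and finally $q_0q_1q_2q_3q_4=1$ forces $i_*(q_4)=1$. Thus $i_*$ annihilates every generator and is the trivial map.

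The part I expect to be fiddly rather than conceptually hard is the incidence bookkeeping. One must check, from the blow-up sequence of Section \ref{section_incidencegraph}, that the tangencies at $P_1$, $P_3$ and between $q_1$ and $q_2$ resolve as claimed — so that, for instance, the class $e_1-e_2$ is actually represented by an embedded sphere meeting the plumbing exactly in $u_4$ and $u_8$ and in nothing else — that $P$ can indeed be chosen regular and small enough that each auxiliary sphere meets it only in standard normal-disk pieces, and that orientations are handled so that each meridian really coincides with the generator $q_k^{\pm1}$ named in Lemma \ref{prviSeifert}. None of this is deep, but it is precisely where a carelessly chosen configuration would break the argument.
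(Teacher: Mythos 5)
Your proof is correct, and while it runs on the same basic mechanism as the paper's (cap or tube off meridians of plumbing spheres using spheres of $\mathbb{CP}^{2}\#16\overline{\mathbb{CP}^{2}}$ not belonging to $P$, then finish with the relations of Lemma \ref{prviSeifert}), the execution of the key step is genuinely different. The paper kills $q_{3}$ exactly as you do, via the disk supplied by $e_{15}$, deduces $i_{*}(h)=i_{*}(q_{0})=1$ from the relations, obtains $i_{*}(q_{2})=1$ from an annulus claimed to lie in $e_{7}$, and then handles $q_{1},q_{4}$ purely algebraically: $i_{*}(q_{1}q_{4})=1$, $i_{*}(q_{1})^{2}=1$ and $i_{*}(q_{4})^{25}=1$ force $i_{*}(q_{1})=i_{*}(q_{4})=1$ since $\gcd(2,25)=1$. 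You instead kill all three meridian generators geometrically by chaining annuli through $u_{8}=\widetilde{q_{2}}$; the incidences you invoke ($e_{14}$ meets only $u_{2}$; $e_{3}$ only $u_{2},u_{8}$; $e_{5}$ only $u_{1},u_{8}$; $e_{1}-e_{2}$ only $u_{4},u_{8}$) all check against Table 2 and the blow-up description, so $q_{2}$ and $q_{1}$ die without any coprimality argument and only $h,q_{0},q_{4}$ are left to the relations. Your route costs a bit more incidence bookkeeping but buys robustness (it does not depend on the multiplicities of the singular fibers) and, notably, it sidesteps a delicate point in the paper's argument: since $q_{1}$ passes through $P_{6}$, the sphere $e_{7}$ also meets $u_{7}=\widetilde{q_{1}}-e_{12}$ ($e_{7}\cdot u_{7}=1$), so $e_{7}$ is not disjoint from the rest of the plumbing and $e_{7}\cap V$ is a three-holed sphere rather than an annulus; the paper's step for $q_{2}$ therefore needs a small repair, and your $e_{5}$ (or $e_{6}$) annulus through $u_{8}$ is precisely such a repair. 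The only caveats in your write-up are the standard ones you already flag: meridians are defined only up to conjugation and inversion, which is harmless for proving triviality, and the generic blow ups must be chosen away from the auxiliary spheres you use, which the construction permits.
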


\begin{proof}
We denoted $V = \mathbb{CP}^{2} \# 16 \overline{\mathbb{CP}^{2}} - intP$, meaning $V$ is the complement of the plumbing. The normal circle to the sphere $u_{3}$ can be contracted along the sphere which intersects it in a single point, and we can choose $e_{15}$ (or $e_{16}$) and contract that normal circle in $V$. Therefore, the corresponding generator trivializes through the inclusion, $i_{*}(q_{3}) = 1$. Relation $q_{3}^{4}h = 1$ from Lemma \ref{prviSeifert} gives $i_{*}(h) = 1$ and then $q_{0}h^{3} = 1$ implies $i_{*}(q_{0}) = 1$.

Looking at Figure \ref{KlasaB}, we can see that $\widetilde{L_{2}}$ and $\widetilde{L_{4}}$ do not intersect each other but intersect the sphere $e_{7}$ in one point each, and their proper transforms $u_{1}$ and $u_{3}$ do the same in the final picture. As $e_{7}$ is disjoint from the rest of the plumbing, normal circles to $u_{1}$ and $u_{3}$, namely $q_{2}$ and $q_{3}$, can be isotoped in $e_{7}$ to bound an annulus in $V$. Therefore, $i_{*}(q_{2}) = i_{*}(q_{3})$, so $i_{*}(q_{2}) = 1$ as well.

From $q_{0}q_{1}q_{2}q_{3}q_{4} = 1$ we are left with $i_{*}(q_{1}q_{4})=1$, which we multiply by $i_{*}(q_{1})$ on the left. Using $i_{*}(q_{1})^{2}=1$ which holds since $q_{1}^{2}h =1$ and $i_{*}(h) = 1$, we get $i_{*}(q_{4})=i_{*}(q_{1})$. So we have $i_{*}(q_{4})^{2} = 1$ as well, but by deducing $i_{*}(q_{4})^{25}=1$ from the last relation in Lemma \ref{prviSeifert}, it follows that $i_{*}(q_{4})=1$.  Finally, $i_{*}(q_{1}) = i_{*}(q_{4}) = 1$ concludes the result.
\end{proof}

\begin{lemma}
\label{simple1}
$X$ is simply connected.
\end{lemma}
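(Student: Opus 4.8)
The plan is to prove Lemma \ref{simple1} by three applications of Van Kampen's theorem, decomposing $X$ along the separating hypersurface $\partial P$ into the two pieces $V = \mathbb{CP}^{2} \# 16\overline{\mathbb{CP}^{2}} - \mathrm{int}\,P$ and the rational homology ball $B$, glued along $\partial P$. First I would observe that $\mathbb{CP}^{2} \# 16\overline{\mathbb{CP}^{2}}$ is simply connected, so writing it as $P \cup_{\partial P} V$ and applying Van Kampen gives that $\pi_1(V)$ is normally generated by the image of $\pi_1(\partial P)$ under the inclusion into $V$; but by Lemma \ref{trivial1} that image is trivial, so in fact $\pi_1(V)$ is \emph{not} immediately forced to be trivial this way — Van Kampen only yields $\pi_1(\mathbb{CP}^{2}\#16\overline{\mathbb{CP}^{2}}) = \pi_1(P) *_{\pi_1(\partial P)} \pi_1(V)$. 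The cleaner route is: since $P$ is a plumbing according to a simply connected graph (a tree) of spheres, $\pi_1(P) = 1$; then $1 = \pi_1(\mathbb{CP}^{2}\#16\overline{\mathbb{CP}^{2}}) = \pi_1(P) *_{\pi_1(\partial P)} \pi_1(V) = \pi_1(V) / N$ where $N$ is the normal closure of $i_*(\pi_1(\partial P))$, and since this quotient is trivial, $\pi_1(V)$ equals the normal closure of $i_*(\pi_1(\partial P))$, which is trivial by Lemma \ref{trivial1}. Hence $\pi_1(V) = 1$.

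Next I would handle the rational homology ball $B$: it is simply connected (the Milnor-fiber-type smoothings $B$ constructed in \cite{rationalConstruct} and used in \cite{weighted} are simply connected — this can be cited), so $\pi_1(B) = 1$ as well. Finally, applying Van Kampen's theorem a third time to $X = V \cup_{\partial P} B$ gives $\pi_1(X) = \pi_1(V) *_{\pi_1(\partial P)} \pi_1(B) = 1 *_{\pi_1(\partial P)} 1 = 1$, so $X$ is simply connected. One technical point worth noting is that $\partial P$ is connected (it is a single Seifert fibered 3-manifold, by Lemma \ref{prviSeifert}), which is exactly what is needed for these Van Kampen decompositions to apply without extra bookkeeping about base points or multiple intersection components.

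The main obstacle, honestly, is not in Lemma \ref{simple1} itself — given Lemmas \ref{prviSeifert} and \ref{trivial1} the argument is essentially formal — but in making sure the three inputs are cleanly in place: (i) that $\pi_1$ of a tree plumbing of spheres is trivial, (ii) that $\pi_1(B) = 1$ for the specific rational homology balls of \cite{weighted}, and (iii) that the image $i_*(\pi_1(\partial P))$ is trivial, which is Lemma \ref{trivial1}. I expect the write-up to simply invoke (i) as standard, cite (ii), quote Lemma \ref{trivial1} for (iii), and then run Van Kampen. If one wanted to be maximally careful, the only subtlety is phrasing the first Van Kampen application correctly: it is the triviality of $\pi_1(\mathbb{CP}^{2}\#16\overline{\mathbb{CP}^{2}})$ together with $\pi_1(P)=1$ that upgrades ``$i_*$ is trivial'' into ``$\pi_1(V)$ is trivial'', and then the second glueing $X = V\cup B$ finishes it. So the proof is short: cite the plumbing-tree and $B$ simple-connectivity facts, combine with Lemmas \ref{prviSeifert} and \ref{trivial1}, and apply Van Kampen twice (or thrice, counting the auxiliary decomposition of the ambient blown-up $\mathbb{CP}^2$).
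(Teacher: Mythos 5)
Your treatment of $V$ coincides with the paper's: Van Kampen applied to $\mathbb{CP}^{2}\#16\overline{\mathbb{CP}^{2}} = V\cup P$, triviality of $\pi_1(P)$ (a tree plumbing of spheres) and of the ambient manifold, together with Lemma \ref{trivial1}, give $\pi_1(V)=1$. The genuine gap is in your treatment of $B$: you assert that the rational homology ball is simply connected and that this ``can be cited.'' It cannot. Rational homology balls used in rational blowdowns are typically \emph{not} simply connected -- the classical Fintushel--Stern balls $B_p$ have $\pi_1\cong\mathbb{Z}/p$, and the Milnor fibers of the QHD smoothings constructed in \cite{rationalConstruct} and used in \cite{weighted} likewise have finite but in general nontrivial fundamental group. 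Nothing in those references yields $\pi_1(B)=1$ for the ball glued in here, and the paper carefully avoids any such claim. So your concluding step $\pi_1(X)=1*_{\pi_1(\partial P)}1$ rests on an unsupported (and generally false) premise.

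The fix is the third Van Kampen application you mention but never actually exploit. With $\pi_1(V)=1$, Van Kampen for $X=V\cup B$ gives $\pi_1(X)=\pi_1(B)\big/\langle j_*(x)\mid x\in\pi_1(\partial B)\rangle$ (quotient by the normal subgroup generated by the image of $j:\partial B\hookrightarrow B$), so what is needed is not $\pi_1(B)=1$ but only that the boundary normally generates $\pi_1(B)$. This is exactly what the paper extracts from the construction of $B$: by \cite[section 8.1]{rationalConstruct} and \cite[pp. 1296--1297]{weighted}, $B$ is the complement of a dual plumbing $P'$ (again simply connected) inside some $\mathbb{CP}^{2}\# k\overline{\mathbb{CP}^{2}}$, and Van Kampen for $\mathbb{CP}^{2}\# k\overline{\mathbb{CP}^{2}} = B\cup P'$ gives $1=\pi_1(B)\big/\langle j_*(x)\mid x\in\pi_1(\partial B)\rangle$, which is precisely the quotient computing $\pi_1(X)$. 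Replacing your citation of ``$\pi_1(B)=1$'' by this argument repairs the proof; as written, the proposal is incomplete at this point.
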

\begin{proof}
$X$ is constructed as the union of $V = \mathbb{CP}^{2} \# 16 \overline{\mathbb{CP}^{2}} - intP$ and some rational homology ball $B$ glued along $\partial P$. Therefore Van Kampen's theorem gives us a presentation of its fundamental group through fundamental groups of the two pieces.

To determine $\pi_{1}(V)$ we also apply Van Kampen's theorem, this time to the decomposition $\mathbb{CP}^{2} \# 16 \overline{\mathbb{CP}^{2}} = V \cup P$. The fundamental group of the plumbing $P$ is trivial because it is homotopic to a wedge sum of several spheres. Also, $\pi_{1}(\mathbb{CP}^{2} \# k \overline{\mathbb{CP}^{2}})$ is trivial for any $k$ because it can be built without $1$-handles, so from $\pi_{1}(\mathbb{CP}^{2} \# 16 \overline{\mathbb{CP}^{2}}) = \pi_{1}(V) *_{\pi_{1}(\partial P)} \pi_{1}(P)$ we get $1 = \pi_{1}(V) \big/ i_{*}(\pi_{1}(\partial P))$. Now Lemma \ref{trivial1} concludes that $\pi_{1}(V)$ is a trivial group.

We denote the inclusion of the boundary $\partial B$ into the rational homology ball $B$ by $j: \partial B \hookrightarrow B$, and $N := \langle i_{*}(x)\cdot j_{*}(x)^{-1} | x \in \pi_{1}(\partial B) \rangle$. From Van Kampen's theorem and the triviality of $\pi_{1}(V)$, we have that $\pi_{1}(X) = \pi_{1}(V) *_{N} \pi_{1}(B) = \pi_{1}(B) \big/ \langle j_{*}(x) | x \in \pi_{1}(\partial B) \rangle$. However, surjectivity of $j_{*}$ comes from the fact that our rational homology ball was constructed as a complement of a certain (dual) plumbing $P^{'}$ from $\mathbb{CP}^{2} \# k \overline{\mathbb{CP}^{2}}$ for some $k > 0$ (\cite[section 8.1]{rationalConstruct} and \cite[pp. 1296-1297]{weighted}). More precisely, from another application of Van Kampen's theorem on $\mathbb{CP}^{2} \# k \overline{\mathbb{CP}^{2}} = B \cup P^{'}$, we get $1 = \pi_{1}(B) \big/ \langle j_{*}(x) | x \in \pi_{1}(\partial B) \rangle$. Therefore, $X$ is simply connected.
\end{proof}

\begin{proposition}
\label{propHomeo1}
$X$ is homeomorphic to $\mathbb{CP}^{2} \# 8 \overline{\mathbb{CP}^{2}}$.
\end{proposition}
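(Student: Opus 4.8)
The plan is to invoke the Freedman--Donaldson criterion recalled just above: by Lemma \ref{simple1} the manifold $X$ is simply connected, so it remains only to check that $X$ has the same Euler characteristic, signature, and parity of intersection form as $\mathbb{CP}^{2}\#8\overline{\mathbb{CP}^{2}}$. All three quantities will be read off from the cut-and-paste description $X = V\cup_{\partial P}B$ with $V = \mathbb{CP}^{2}\#16\overline{\mathbb{CP}^{2}} - \mathrm{int}\,P$.

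First I would compute $\chi(X)$. The plumbing $P$ is built over a tree with $8$ vertices, hence it deformation retracts onto a wedge of $8$ two-spheres and $\chi(P)=9$; its boundary is a closed $3$-manifold, so $\chi(\partial P)=0$, and inclusion--exclusion gives $\chi(V)=\chi(\mathbb{CP}^{2}\#16\overline{\mathbb{CP}^{2}})-\chi(P)=19-9=10$. Since $B$ is a rational homology ball, $\chi(B)=1$, so $\chi(X)=\chi(V)+\chi(B)=11=\chi(\mathbb{CP}^{2}\#8\overline{\mathbb{CP}^{2}})$. For the signature I would apply Novikov additivity: $\sigma(\mathbb{CP}^{2}\#16\overline{\mathbb{CP}^{2}})=\sigma(V)+\sigma(P)$, and because the plumbing graph is a negative-definite tree of rank $8$ we have $\sigma(P)=-8$, hence $\sigma(V)=-15-(-8)=-7$; as $H_{2}(B;\mathbb{Q})=0$ we get $\sigma(B)=0$, so $\sigma(X)=\sigma(V)=-7=\sigma(\mathbb{CP}^{2}\#8\overline{\mathbb{CP}^{2}})$.

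Combining this with simple connectivity, $X$ has $b_{2}(X)=\chi(X)-2=9$ and $(b_{2}^{+}(X),b_{2}^{-}(X))=(1,8)$, and its intersection form $Q_{X}$ is unimodular by Poincar\'e duality (there is no torsion in $H_2(X)$ since $\pi_{1}(X)=1$). For parity I would argue that $Q_{X}$ cannot be even: an indefinite even unimodular form has signature divisible by $8$, while $\sigma(X)=-7$. Hence $Q_{X}$ is odd, and the odd indefinite unimodular form of rank $9$ and signature $-7$ is $\langle 1\rangle\oplus 8\langle -1\rangle$, the intersection form of $\mathbb{CP}^{2}\#8\overline{\mathbb{CP}^{2}}$. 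Then $X$ and $\mathbb{CP}^{2}\#8\overline{\mathbb{CP}^{2}}$ are smooth, simply connected, and agree in Euler characteristic, signature, and parity, so the criterion above yields a homeomorphism.

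I expect no serious obstacle here; the only care needed is bookkeeping --- keeping the orientation conventions for Novikov additivity straight, confirming $b_{2}(P)=8$ from the $\mathcal{B}^{4}$ graph in Figure \ref{graphB}, and noting that once $\sigma(X)=-7$ is pinned down the parity is forced for free because $-7\not\equiv 0\pmod 8$, so one need not exhibit by hand a surviving exceptional class disjoint from $P$ to certify that $Q_X$ is odd.
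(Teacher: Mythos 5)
Your proposal is correct and follows essentially the same route as the paper: the same cut-and-paste bookkeeping gives $\chi(X)=19-9+1=11$ and $\sigma(X)=-15-(-8)=-7$ (with $\sigma(P)=-8$ from negative definiteness and $\sigma(B)=0$, $\chi(B)=1$ for the rational homology ball), and then simple connectivity from Lemma \ref{simple1} feeds into the Freedman--Donaldson criterion. The one point where you diverge is the parity step: the paper cites Rokhlin's theorem (an even form on a smooth simply connected $4$-manifold would force signature divisible by $16$, contradicting $-7$), whereas you use the purely algebraic fact that an indefinite even unimodular form has signature divisible by $8$, together with $b_{2}^{+}=1$, $b_{2}^{-}=8$ to guarantee indefiniteness. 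Both arguments are valid here; yours has the small advantage of using no smooth input at all, while the paper's is a one-line citation, so the difference is cosmetic rather than structural.
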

\begin{proof}
To calculate $\chi(X)$ and $\sigma(X)$ we use the formulas:
$$\chi(X) = \chi(\mathbb{CP}^{2} \# 16 \overline{\mathbb{CP}^{2}}) - \chi(P) + \chi(B) = 19 - 9 + 1 = 11$$
$$ \sigma(X) = \sigma(\mathbb{CP}^{2} \# 16 \overline{\mathbb{CP}^{2}}) - \sigma(P) + \sigma(B) = -15 - (-8) = -7$$
Rokhlin's theorem \cite{rokhlin} implies that if the signature of a smooth simply connected $4$-manifold is not divisible by $16$, its intersection form must be odd, so this is the case for $X$. Therefore, the three invariants of $X$ match the corresponding invariants of $\mathbb{CP}^{2} \# 8 \overline{\mathbb{CP}^{2}}$. As $X$ is simply connected by Lemma \ref{simple1}, it is homeomorphic to $\mathbb{CP}^{2} \# 8 \overline{\mathbb{CP}^{2}}$ as a consequence of Freedman's theorem.
\end{proof}

\subsection{Exoticness of $X$}


To prove that $X$ is not diffeomorphic to $\mathbb{CP}^{2} \# 8 \overline{\mathbb{CP}^{2}}$, we will use its symplectic structure $\omega_{X}$ explained earlier (coming from \cite{symplectic}), and the following result:

\begin{lemma}[\textbf{\cite{lili}, Theorem D}]
\label{ThD}
There is a unique symplectic structure on $\mathbb{CP}^{2} \# m \overline{\mathbb{CP}^{2}}$ for all $2 \leq m \leq 9$ up to diffeomorphism and deformation.
\end{lemma}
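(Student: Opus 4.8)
The plan is to prove the statement by showing that every symplectic form on $X_m := \mathbb{CP}^{2} \# m \overline{\mathbb{CP}^{2}}$ can be carried, by a self-diffeomorphism, to a form deformation equivalent to a standard K\"ahler one; the argument splits naturally into first identifying the symplectic canonical class and then analyzing the symplectic cone once that class is fixed. I would begin by recording the minimal base cases. Gromov's filling-by-disks argument shows the symplectic form on $\mathbb{CP}^{2}$ is unique up to scale and diffeomorphism, and McDuff's structure theory for rational and ruled surfaces gives the analogous uniqueness for the minimal ruled surfaces $S^{2}\times S^{2}$ and $\mathbb{F}_{1} = X_{1}$. The goal is then to bootstrap from these minimal models up to all $X_m$ with $2 \leq m \leq 9$ by repeated symplectic blow-down, so the whole statement becomes an induction on $m$ anchored at these classical endpoints.

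The first main step is to pin down the canonical class. For a symplectic $4$-manifold with $b^{+} = 1$, Taubes' equivalence of Seiberg--Witten and Gromov invariants shows that the symplectic canonical class $K_{\omega}$ is, up to sign, a Seiberg--Witten basic class, and the set of basic classes is a smooth invariant. For $X_m$ in our range the Seiberg--Witten constraints, together with the wall-crossing formula specific to $b^{+} = 1$, force $K_{\omega}^{2} = 2\chi(X_m) + 3\sigma(X_m) = 9 - m$ and restrict $K_{\omega}$ to a single orbit of the group of automorphisms of $H^{2}(X_m; \mathbb{Z})$ that are realized by self-diffeomorphisms. Applying such a diffeomorphism, I would normalize $K_{\omega}$ to coincide with the standard canonical class $K_{\mathrm{std}} = -3h + e_{1} + \cdots + e_{m}$ of the complex rational surface. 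This is where the hypothesis $m \leq 9$ enters decisively: it is exactly the range in which $K_{\mathrm{std}}^{2} = 9 - m \geq 0$ and the basic-class constraints are rigid enough to leave a single diffeomorphism orbit.

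With $K_{\omega} = K_{\mathrm{std}}$ fixed, the remaining task is to show that the space of symplectic forms carrying this canonical class is connected through deformations. Here I would invoke Taubes' theorem that every class $E$ with $E^{2} = -1$, $K_{\mathrm{std}} \cdot E = -1$, and $E \cdot [\omega] > 0$ is represented by an embedded symplectic (indeed $J$-holomorphic) $(-1)$-sphere; blowing such a sphere down produces a symplectic form on $X_{m-1}$, and induction reduces matters to the minimal base cases above. To upgrade this statement about cohomology classes to deformation equivalence of the forms themselves, I would use the Lalonde--McDuff inflation technique to move $[\omega]$ freely inside the symplectic cone $\{\, \alpha : \alpha^{2} > 0,\ \alpha \cdot E > 0 \text{ for every exceptional class } E \,\}$, whose connectedness for fixed $K_{\mathrm{std}}$ is the analytic heart of the argument, and then Moser's theorem to absorb any residual change within a fixed cohomology class.

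The main obstacle is precisely this connectivity statement, and within it the borderline case $m = 9$. For $m \leq 8$ the manifold is a symplectic del Pezzo surface: $-K_{\mathrm{std}}$ pairs positively with every exceptional class, there are finitely many such classes, and the cone structure is comparatively rigid. At $m = 9$, however, $K_{\mathrm{std}}^{2} = 0$, so $-K_{\mathrm{std}}$ is only nef, the set of exceptional classes becomes infinite, and controlling the inflation requires a more delicate analysis of the $b^{+} = 1$ wall-crossing and of limiting $J$-holomorphic configurations as $[\omega]$ approaches the boundary of the cone. Handling this degenerate endpoint, rather than the generic del Pezzo range, is where I expect the real work to lie.
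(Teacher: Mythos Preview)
The paper does not prove this lemma at all: it is stated as a quotation of Theorem~D from the cited reference and is used as a black box in the subsequent arguments. There is therefore no ``paper's own proof'' to compare your proposal against.

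That said, your sketch is a reasonable outline of the strategy that actually underlies the cited result in the literature (uniqueness of the symplectic canonical class up to diffeomorphism via Seiberg--Witten theory on $b^{+}=1$ manifolds, followed by connectedness of the symplectic cone for fixed canonical class via Taubes' existence of $J$-holomorphic exceptional spheres and inflation, with Gromov/McDuff handling the minimal base cases). One caveat: the step where you assert that the Seiberg--Witten constraints force $K_{\omega}$ into a single diffeomorphism orbit is doing a lot of work and is not as automatic as your phrasing suggests; in the actual proofs this relies on a nontrivial classification of characteristic classes of square $9-m$ together with knowledge of which lattice automorphisms of $H^{2}$ are realized by diffeomorphisms (Wall, Friedman--Morgan). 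Your identification of $m=9$ as the delicate endpoint is accurate, but for the purposes of the present paper none of this needs to be reproduced---the lemma is simply imported.
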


\begin{remark}
We will slightly abuse notation denoting symplectic forms as their cohomology classes. Poincar\' e dual of $\alpha$ will be denoted by $PD(\alpha)$.

\end{remark}

A symplectic structure $\Omega$ on a $4$-manifold $M$ determines a contractible family $\mathcal{J}$ of $\Omega$-compatible almost complex structures $J$ on the cotangent bundle $T^{*}M$. The first Chern class is the same for all $J \in \mathcal{J}$ and it is called the symplectic canonical class $K_{\Omega} = c_{1}(T^{*}M, J)$.

The strategy of proving that $X$ is exotic is as in \cite{Park2005}, to calculate the cup product of the symplectic class and a compatible canonical class on both $\mathbb{CP}^{2} \# 8 \overline{\mathbb{CP}^{2}}$ and $X$, see that the signs of these products differ, and prove that this is impossible because of the uniqueness result stated in Lemma \ref{ThD}.

Lemma \ref{stark} essentially stated as \cite[Lemma $5.4$]{starkston} presents a standard symplectic structure on $\mathbb{CP}^{2} \# k \overline{\mathbb{CP}^{2}}$ and calculates the sign of the required cup product to be negative. Lemma \ref{signLemma} shows that this product has to be negative for any symplectic structure on $\mathbb{CP}^{2} \# 8 \overline{\mathbb{CP}^{2}}$ or $\mathbb{CP}^{2} \# 9 \overline{\mathbb{CP}^{2}}$, and this is a rather special result for $\mathbb{CP}^{2} \# m \overline{\mathbb{CP}^{2}}$ given $2 \leq m \leq 9$. In general, the sign $K_{\omega} \cdot \omega$ can be used as a smooth invariant on a symplectic manifold only when we know the manifold in question is minimal, and this is called \textit{the Kodaira dimension}.

\begin{lemma}
\label{stark}
For every $k > 0$, $\mathbb{CP}^{2} \# k \overline{\mathbb{CP}^{2}}$ admits a symplectic structure $\omega$ that satisfies ${PD(\omega) = ah - b_{1}e_{1} -...-b_{k}e_{k}}$ for some positive rational numbers $a, b_{1},..., b_{k}$. For fixed $a > 0$, $b_{i}$'s can be chosen to be arbitrarily small. The induced canonical class $K := K_{\omega}$ satisfies $PD(K) = -3h + e_{1} +...+ e_{k}$ and for small enough $b_{i}'s$, we have $K \cdot \omega < 0$.
\end{lemma}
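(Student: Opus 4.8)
The plan is to produce $\omega$ by blowing up $\mathbb{CP}^{2}$ at $k$ distinct points in the Kähler category with small exceptional sizes, and then to read off the canonical class and the cup product $K\cdot\omega$ from the standard blow-up formulas; the only symplectic input is that a Kähler form is symplectic and that its symplectic canonical class coincides with the holomorphic one.

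First I would fix a positive rational number $a$ and let $\pi\colon\widetilde{X}\to\mathbb{CP}^{2}$ be the blow-up at $k$ distinct points, so that $\widetilde{X}$ is diffeomorphic to $\mathbb{CP}^{2}\#k\overline{\mathbb{CP}^{2}}$ with exceptional $(-1)$-spheres in classes $e_{1},\dots,e_{k}$. For any small enough positive rationals $b_{1},\dots,b_{k}$ (with $a$ held fixed) the $\mathbb{Q}$-divisor class $ah-b_{1}e_{1}-\cdots-b_{k}e_{k}$ is ample, being a small perturbation of the pullback of the ample class $ah$ (equivalently, $\pi^{*}L-\varepsilon[E]$ is an ample $\mathbb{Q}$-class whenever $L$ is ample and $\varepsilon$ is small, and ampleness is open). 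Hence this class is represented by a Kähler form $\omega$, which is symplectic and has $PD(\omega)=ah-b_{1}e_{1}-\cdots-b_{k}e_{k}$ with the $b_{i}$ as small (and rational) as desired. Concretely, the same form arises by symplectically blowing up $(\mathbb{CP}^{2},a\,\omega_{FS})$ at the $k$ points with exceptional areas $b_{i}$, excising pairwise disjoint small Darboux balls and collapsing their Hopf boundaries, cf.\ \cite{4-manifolds}.

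Since $\omega$ is Kähler, $K=K_{\omega}$ is the holomorphic canonical class of $\widetilde{X}$; as $K_{\mathbb{CP}^{2}}$ is Poincaré dual to $-3h$ and the blow-up formula gives $K_{\widetilde{X}}=\pi^{*}K_{\mathbb{CP}^{2}}+E_{1}+\cdots+E_{k}$, we get $PD(K)=-3h+e_{1}+\cdots+e_{k}$ (consistent with adjunction, $K\cdot e_{i}=-2-e_{i}\cdot e_{i}=-1$). Using $h\cdot h=1$, $h\cdot e_{i}=0$ and $e_{i}\cdot e_{j}=-\delta_{ij}$,
\[
K\cdot\omega=\Bigl(-3h+\sum_{i=1}^{k}e_{i}\Bigr)\cdot\Bigl(ah-\sum_{i=1}^{k}b_{i}e_{i}\Bigr)=-3a+\sum_{i=1}^{k}b_{i},
\]
which is strictly negative as soon as $\sum_{i}b_{i}<3a$, in particular once each $b_{i}$ is small enough (e.g.\ $b_{i}<3a/k$). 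This gives all three assertions. The arithmetic is routine; the one place to be careful is the sign of the symplectic canonical class, which I would fix via the Kähler model (where $K$ is unambiguously the holomorphic canonical class) rather than via a uniqueness statement such as Lemma \ref{ThD}, since the latter is available only for $2\le m\le 9$ while this lemma is needed for every $k>0$.
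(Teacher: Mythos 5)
Your argument is correct and, at its computational core, is the same as the paper's: blow up $k$ times with small exceptional sizes, record $PD(\omega)=ah-\sum b_{i}e_{i}$, use the blow-up formula for the canonical class to get $PD(K)=-3h+\sum e_{i}$, and conclude $K\cdot\omega=-3a+\sum b_{i}<0$ once $\sum b_{i}<3a$. The difference is in how $\omega$ is produced and how $K_{\omega}$ is identified: the paper stays entirely in the symplectic category, quoting the symplectic blow-up of McDuff--Salamon (Theorem 7.1.21 for the change $\omega_{\widetilde M}=\omega_{M}-\pi\lambda^{2}PD(e)$ with $\pi\lambda^{2}$ chosen small and rational, and formula (7.1.31) for $c_{1}(T^{*}\widetilde M)=c_{1}(T^{*}M)+PD(e)$), whereas you work in the K\"ahler category via ample $\mathbb{Q}$-classes and identify $K_{\omega}$ with the holomorphic canonical class. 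Both routes are fine; yours buys a cleaner identification of the canonical class, the paper's avoids any algebro-geometric input. One caveat on your K\"ahler argument: the justification ``ampleness is open'' does not by itself apply, since $\pi^{*}(ah)$ is only nef (it is trivial on the exceptional curves), so $ah-\sum b_{i}e_{i}$ is not a small perturbation of an \emph{ample} class; you need the standard but slightly stronger fact that $\pi^{*}L-\varepsilon\sum E_{i}$ is ample for small $\varepsilon>0$ (e.g.\ by Nakai--Moishezon, using that an irreducible plane curve of degree $d$ has multiplicity at most $d$ at each blown-up point, or via Seshadri constants). Since you also note that the same form arises from the symplectic blow-up with exceptional areas $b_{i}$ --- which is exactly the paper's construction --- this imprecision is easily repaired and does not affect the conclusion.
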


\begin{proof}
In $\mathbb{CP}^{2}$, the dual of the cohomology class of $\omega$ is $a h$ for some $a > 0$ and we can choose it to be rational - this is because the symplectic area of $\mathbb{CP}^{1} \subset \mathbb{CP}^{2}$ is a positive number $a$ and it can be normalized to be rational (we could normalize it so that $a = 1$, but keep "$a$" to see its importance). The proof of this lemma follows from \cite[section $7.1$]{mcduff}, and more precisely from Theorem $7.1.21$ on the existence and properties of the symplectic blow up. Namely, part (v) of that theorem implies that after the blow up, the cohomology class of the symplectic form changes as $\omega_{\Tilde{M}} = \omega_{M} - \pi \lambda^{2} PD(e)$. Here $e$ denotes the homology class of the exceptional curve and $\lambda$ is the radius of the ball removed in the process of the symplectic blow up as explained in \cite{mcduff}. Choosing the ball in Darboux's chart to be as small as needed and $\pi \lambda^{2}$ rational, and repeating the procedure $k$ times, gives us $PD(\omega) = ah - b_{1}e_{1} -...-b_{k}e_{k}$ as required.

Formula $(7.1.31)$ in \cite{mcduff} shows the canonical class of the blow up $\Tilde{M}$ to be $c_{1}(T^{*}\Tilde{M}) = c_{1}(T^{*}M) + PD(e)$. From the previous and $PD(K_{\mathbb{CP}^{2}}) = -3h$, we get $PD(K) = -3h + e_{1} +...+ e_{k}$. Finally, $K \cdot \omega = -3a + b_{1} +...+ b_{k}$ is negative for $b_{i}$'s small enough.
\end{proof}

\begin{lemma}
\label{signLemma}
For any symplectic structure $\overline{\omega}$ on $M = \mathbb{CP}^{2} \# m \overline{\mathbb{CP}^{2}}$ for $2 \leq m \leq 9$:
\begin{center}
    $K_{\overline{\omega}}\cdot \overline{\omega} < 0$\end{center}
\end{lemma}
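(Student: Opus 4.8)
The plan is to reduce the statement for arbitrary $\overline{\omega}$ to the standard symplectic form $\omega$ from Lemma \ref{stark}, using the uniqueness result of Lemma \ref{ThD}. By that lemma, any symplectic structure $\overline{\omega}$ on $M = \mathbb{CP}^2 \# m\overline{\mathbb{CP}^2}$ with $2 \le m \le 9$ is, up to diffeomorphism and deformation, the standard one. Deformation preserves the sign of $K \cdot \omega$ because along a deformation path $\omega_t$ the canonical class $K_{\omega_t}$ varies continuously in $H^2(M;\mathbb{Z})$, hence is locally constant, so $K_{\omega_t} \cdot \omega_t$ is a continuous real-valued function of $t$ that is nonzero throughout (the product is never zero: one can invoke that $K \cdot \omega \ne 0$ for any symplectic form on a manifold with $b_2^+ = 1$ that is not a blow-up issue, or more concretely just track that it stays nonzero along the specific path, which will be clear since the endpoints are both nonzero and the class is locally constant). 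Thus the sign is a deformation invariant.

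Next I would handle the diffeomorphism. If $f \colon M \to M$ is a diffeomorphism with $f^*\overline{\omega}$ deformation equivalent to $\omega$, then $K_{\overline{\omega}} \cdot \overline{\omega} = (f^* K_{\overline{\omega}}) \cdot (f^* \overline{\omega}) = K_{f^*\overline{\omega}} \cdot f^*\overline{\omega}$, since pullback is an isometry of the intersection form and is natural with respect to the canonical class ($f^* c_1(T^*M, J) = c_1(T^*M, f^*J)$). Combining this with the deformation invariance from the previous step, $K_{\overline{\omega}} \cdot \overline{\omega}$ equals $K_\omega \cdot \omega$ for the standard form $\omega$, which is negative by Lemma \ref{stark}. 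This would complete the argument, taking $k = m$ there.

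The main obstacle I expect is justifying rigorously that $K_\omega \cdot \omega$ is nonzero along the whole deformation path, i.e.\ that the sign cannot pass through zero. The cleanest way is to recall the classification of symplectic $4$-manifolds with $b^+ = 1$ (work of Li--Liu and Taubes): $\mathbb{CP}^2 \# m\overline{\mathbb{CP}^2}$ for $m \ge 2$ has Kodaira dimension $-\infty$ (it is rational), and for such manifolds $K_\omega \cdot \omega < 0$ for every symplectic form — this is precisely the statement that the Kodaira dimension is well-defined and equals $-\infty$. If I wanted to stay self-contained and only use results quoted in the excerpt, I would instead argue directly: Lemma \ref{ThD} gives a diffeomorphism $f$ and a deformation from $f^*\overline{\omega}$ to the standard $\omega$; along that deformation the integral class $K$ is locally constant, and since $\omega \cdot \omega > 0$ and $K \cdot \omega < 0$ at the standard end while $\omega_t \cdot \omega_t > 0$ always, the continuous function $t \mapsto K_{\omega_t}\cdot \omega_t$ can only fail to keep its sign if $K_{\omega_t}$ jumps, which it does not.

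A secondary point worth spelling out is the orientation/sign conventions: one must make sure the canonical class $K$ here is $c_1(T^*M) = -c_1(TM)$ as defined in the excerpt, so that on $\mathbb{CP}^2$ itself $PD(K) = -3h$ and $K \cdot \omega = -3a < 0$, matching Lemma \ref{stark}; this is the base case of the deformation-and-blow-up description, and every term added by a blow-up is $+e_i$ on the canonical side and $-b_i e_i$ on the symplectic side, so the product only decreases. With these conventions fixed, the three steps — deformation invariance of the sign, diffeomorphism invariance of the sign, and reduction to Lemma \ref{stark} via Lemma \ref{ThD} — assemble into the proof.
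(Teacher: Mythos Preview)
Your overall strategy---reduce to the standard form via Lemma~\ref{ThD}, and then appeal to Lemma~\ref{stark}---is the same as the paper's. But the crucial step, showing that $K_{\omega_t}\cdot\omega_t$ never vanishes along the deformation path, is not established. Your justification ``can only fail to keep its sign if $K_{\omega_t}$ jumps, which it does not'' is simply wrong: a continuous function of $t$ can pass through zero even though $K$ stays constant. Knowing that the endpoints are nonzero is of course not enough either. You flag this as the main obstacle yourself, and the two fixes you propose are (a) to cite the Kodaira-dimension classification of rational $4$-manifolds, which is external and much stronger than what is needed, or (b) the fallacious ``$K$ does not jump'' argument.

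The paper closes exactly this gap, and it is worth seeing how. After using the deformation to pin down $PD(K_{\overline\omega})=-3h+e_1+\cdots+e_m$ (this is the part you did correctly), it writes $PD(\overline\omega)=a_0h+a_1e_1+\cdots+a_me_m$, uses the path only to show $a_0>0$, and then argues directly on $\overline\omega$: assuming $K_{\overline\omega}\cdot\overline\omega\ge 0$ gives $3a_0\le -\sum a_i$, while $\overline\omega\cdot\overline\omega>0$ gives $a_0^2>\sum a_i^2$; Cauchy--Schwarz with $m\le 9$ then yields $9a_0^2\le(\sum a_i)^2\le m\sum a_i^2\le 9\sum a_i^2<9a_0^2$, a contradiction. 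Geometrically this is the light-cone fact that in signature $(1,m)$ a timelike vector ($\overline\omega$) cannot be orthogonal to, or on the wrong side of, a vector with $K^2=9-m\ge 0$. This is precisely the missing ingredient in your argument, and once you have it you no longer need the continuity-along-the-path step at all.
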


\begin{proof}

This result essentially follows from Lemma \ref{ThD}, as $\overline{\omega}$ has to be deformation equivalent to the standard symplectic structure $\omega$, meaning that up to diffeomorphism, there is a path of symplectic forms on $M$ connecting them.

So there is a symplectomorphism $\psi: (M, \omega_{M}) \longrightarrow (\mathbb{CP}^{2} \# m \overline{\mathbb{CP}^{2}}, \omega)$ such that there is a path of symplectic forms $\omega_{t}$ connecting $\omega_{0} = \overline{\omega}$ and $\omega_{1} = \omega_{M}$. Naturality of Chern classes gives $K_{\omega_{M}} = \psi^{*}(K)$ so $K_{\omega_{M}} \cdot \omega_{M} = \psi^{*}(K) \cdot \psi^{*}(\omega) = \psi^{*}(K \cdot \omega) = K \cdot \omega$ so symplectomorphism does not change this product.

Assume that $K_{\overline{\omega}}\cdot \overline{\omega} \geq 0$.
Firstly, the canonical class $K_{\overline{\omega}}$ does not change by deformation so $PD(K_{\overline{\omega}}) = - 3h + e_{1} +...+ e_{m}$. Now $PD(\overline{\omega}) = a_{0}h + a_{1}e_{1} +...+ a_{m}e_{m}$ for some numbers $a_{i} \in \mathbb{R}$. However, as $\overline{\omega}$ is symplectic, we must have $\overline{\omega} \cdot \overline{\omega} > 0$ so $a_{0}^{2} > \sum_{i=1}^{m} a_{i}^{2}$. Having $K_{\overline{\omega}} \cdot \overline{\omega} = -3a_{0} - a_{1} -...- a_{m} \geq 0$, we get $3a_{0} \leq - (\sum_{i=1}^{m} a_{i})$. If $a_{0} \leq 0$, from the path of symplectic forms with $ PD(\omega_{t}) = a_{0}^{t} h + a_{1}^{t} e_{1} + ... + a_{m}^{t} e_{m}$, we would have a continuous funcition $a^{t}_{0}$ connecting $a_{0}^{0} = a_{0} \leq 0$ and $a_{0}^{1} > 0$ (as $a > 0$ for symplectomorphic $\omega$). Then there would be $\tau$ for which $a^{\tau}_{0} = 0$ and thus $\omega_{\tau} \cdot \omega_{\tau} \leq 0$, which is not possible. Therefore, $a_{0} > 0$ and from earlier we have $0 < 3a_{0} \leq - (\sum_{i=1}^{m} a_{i})$ so:

\vspace{0.1cm}
\begin{center}
    $9a_{0}^{2} \leq (\sum_{i=1}^{m} a_{i})^{2} \leq m (\sum_{i=1}^{m} a_{i}^{2}) \leq 9 (\sum_{i=1}^{m} a_{i}^{2}) < 9 a_{0}^{2}$
\end{center}
\vspace{0.1cm}

\noindent
provides the required contradiction using the Cauchy–Schwarz inequality.
\end{proof}

\begin{proposition}
\label{propDifeo1}
$X$ is not diffeomorphic to $\mathbb{CP}^{2} \# 8 \overline{\mathbb{CP}^{2}}$.
\end{proposition}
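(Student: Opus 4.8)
The plan is to show that $K_{\omega_X}\cdot\omega_X>0$. Since this number is a diffeomorphism invariant of the pair $(X,\omega_X)$, if $X$ were diffeomorphic to $\mathbb{CP}^2\#8\overline{\mathbb{CP}^2}$ then $\omega_X$ would push forward to a symplectic form on $\mathbb{CP}^2\#8\overline{\mathbb{CP}^2}$ with positive pairing against its canonical class, contradicting Lemma \ref{signLemma} (with $m=8$). So the whole proposition reduces to pinning down the sign of $K_{\omega_X}\cdot\omega_X$.

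I would compute it by comparing $X$ with $Y:=\mathbb{CP}^2\#16\overline{\mathbb{CP}^2}$ through the common piece $V$. A standard Mayer--Vietoris computation (using that $\partial P$ is a rational homology sphere, $B$ a rational homology ball, and $P$ simply connected with $H_2(P;\mathbb{Q})$ spanned by the plumbing spheres $u_1,\dots,u_8$) shows that over $\mathbb{Q}$ the inclusions give an isometric isomorphism $H_2(V)\cong H_2(X)$ and an isometric injection $H_2(V)\hookrightarrow H_2(Y)$ with image the orthogonal complement $\langle u_1,\dots,u_8\rangle^{\perp}$, a $9$-dimensional subspace that is well defined because the plumbing matrix $M=(u_i\cdot u_j)$ is negative definite. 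Let $\omega$ be the form on $Y$ from Lemma \ref{stark}, with $PD(\omega)=ah-\sum_{i=1}^{16}b_i e_i$, and let $K$ be its canonical class, $PD(K)=-3h+\sum_{i=1}^{16}e_i$. The symplectic rational blowdown of \cite{symplectic} gives $\omega_X|_V=\phi_V^{*}(\omega|_V)$, and since the canonical class is locally $c_1$ of a compatible almost complex structure also $K_{\omega_X}|_V=\phi_V^{*}(K|_V)$. As $\phi_V$ is an orientation-preserving diffeomorphism of $V$, the map $\phi_V^{*}$ preserves the intersection form on $H_2(V;\mathbb{Q})$, so the twist by $\phi_V$ drops out of any pairing and, under the identification above, $PD(\omega_X)$ and $PD(K_{\omega_X})$ correspond to the orthogonal projections onto $\langle u_1,\dots,u_8\rangle^{\perp}$ of $PD(\omega)$ and $PD(K)$. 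Writing $\varpi$ and $\kappa$ for the complementary projections onto the span of the $u_j$, this gives
$$K_{\omega_X}\cdot\omega_X \;=\; K\cdot\omega \;-\; \kappa\cdot\varpi \;=\; K\cdot\omega \;-\; \vec d^{\,T}M^{-1}\vec\delta,$$
where $\vec d=(K\cdot u_j)_j$ and $\vec\delta=(\omega\cdot u_j)_j$.

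It then remains to run the computation for the plumbing of Figure \ref{graphB} --- a tree with a central $(-3)$-vertex carrying three leaves of weights $-4,-4,-2$ and one arm $-2,-2,-3,-4$, so $M$ is explicit. By adjunction $K\cdot u_j=-2-u_j^2$, so Table 2 gives $\vec d=(2,1,2,0,0,0,1,2)$. By Lemma \ref{stark} the $b_i$ can be taken arbitrarily small for fixed $a$, subject only to every symplectic area $\omega\cdot u_j$ remaining positive (mild size conditions on the blow-up balls, e.g. $b_2>b_{12}$ and $b_4>b_{13}$); moreover $K_{\omega_X}\cdot\omega_X$ is an affine function of the $b_i$, equal at $b_i=0$ to $a\bigl(-3-\vec d^{\,T}M^{-1}\vec s\bigr)$ with $\vec s=(1,1,1,0,1,0,2,2)$ the vector of $h$-coefficients of the $u_j$. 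Solving $M\vec\gamma=\vec s$ by back-substitution along the arm gives $\vec d^{\,T}M^{-1}\vec s=-\tfrac{69}{8}$; as a sanity check the same procedure yields $\vec d^{\,T}M^{-1}\vec d=-8$, consistent with $K_{\omega_X}^2=2\chi(X)+3\sigma(X)=1$. Hence $K_{\omega_X}\cdot\omega_X\to\tfrac{45a}{8}>0$ as $b_i\to 0$, so $K_{\omega_X}\cdot\omega_X>0$ for all sufficiently small admissible $b_i$, and the proposition follows via Lemma \ref{signLemma} as explained above.

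I expect the real work to be in the middle step, not the final arithmetic: one must check carefully that the restriction maps to $V$ are genuinely isometric over $\mathbb{Q}$, that the symplectomorphism $\phi_V$ really disappears from the scalar $K_{\omega_X}\cdot\omega_X$, and --- most importantly --- that the canonical class of $X$ restricts on $V$ to that of $Y$. This last point is exactly what the symplectic rational blowdown of \cite{symplectic} provides and is what legitimizes the projection formula; the $8\times8$ linear algebra afterward is routine.
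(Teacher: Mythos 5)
Your proposal is correct and follows essentially the same route as the paper: decompose $K$ and $\omega$ over the rational homology sphere $\partial P$, use the symplectic rational blowdown to identify $K_{X}|_{V}$ and $\omega_{X}|_{V}$ with $\phi_{V}^{*}(K|_{V})$ and $\phi_{V}^{*}(\omega|_{V})$, and compute $K_{X}\cdot\omega_{X}=K\cdot\omega-K|_{P}\cdot\omega|_{P}$ via the dual basis and $M^{-1}$, then invoke Lemma \ref{signLemma}. Your limiting value $\tfrac{45a}{8}=5.625a$ and the check $\vec d^{\,T}M^{-1}\vec d=-8$ agree exactly with the paper's explicit formula, so the argument is sound.
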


\begin{proof}

As mentioned, the strategy is to calculate the cup product of the symplectic class and a compatible canonical class for $X$, and see that the sign of this product is positive, which proves exoticness of $X$ using Lemma \ref{signLemma}.

Let $\omega$ denote the symplectic form on $\mathbb{CP}^{2} \# 16 \overline{\mathbb{CP}^{2}}$ provided by Lemma \ref{stark}, whose Poincar\' e dual is equal to:
\begin{center}
    $PD(\omega) = ah - b_{1}e_{1} -...-b_{16}e_{16}$
\end{center}

and let $K$ denote the corresponding canonical class:
\begin{center}
    $PD(K) = -3h + e_{1} +...+ e_{16}$
\end{center}

From the previous two we have:
\begin{center}
    $K \cdot \omega = -3a + b_{1} +...+ b_{16}$
\end{center}

The symplectic structure $\omega_{X}$ on $X$ obtained after the rational blow down, was defined earlier in section \ref{section_example1}, and it has a compatible symplectic canonical class $K_{X}$ coming from a generic almost complex structure compatible with $\omega_{X}$.

To be able to calculate $K_{X} \cdot \omega_{X}$, we will decompose the cohomology classes $K$ and $\omega$. Denoting again $V = \mathbb{CP}^{2} \# 16 \overline{\mathbb{CP}^{2}} - intP$, we have decompositions $\mathbb{CP}^{2} \# 16 \overline{\mathbb{CP}^{2}} = V \cup P$ and $X = V \cup B$.

As a first step, note that the boundary Seifered fibered $3$-manifold $\partial P = - \partial B$ is a \textit{rational homology sphere} because $\frac{3}{1} + \frac{1}{2} + \frac{1}{4} + \frac{1}{4} + \frac{18}{25} \neq 0$ (see section $1.2.3$ in \cite{saveliev}). To prove it directly, we can calculate $H_{1}(\partial P; \mathbb{Z})$ from Lemma \ref{prviSeifert} and see that it is a finite group, which then implies $H^{*}(\partial P; \mathbb{Q}) = H^{*}(S^{3}; \mathbb{Q})$.

From the Mayer-Vietoris sequences for decompositions $\mathbb{CP}^{2} \# 16 \overline{\mathbb{CP}^{2}} = V \cup P$ and $X = V \cup B$, we get exact sequences:
$$H^{1}(\partial P ; \mathbb{Q}) \longrightarrow H^{2}(\mathbb{CP}^{2} \# 16 \overline{\mathbb{CP}^{2}}; \mathbb{Q}) \longrightarrow H^{2}( V ; \mathbb{Q}) \oplus H^{2}( P ; \mathbb{Q}) \longrightarrow H^{2}(\partial P ; \mathbb{Q})$$
$$H^{1}(\partial B ; \mathbb{Q}) \longrightarrow H^{2}(X; \mathbb{Q}) \longrightarrow H^{2}( V ; \mathbb{Q}) \oplus H^{2}( B ; \mathbb{Q}) \longrightarrow H^{2}(\partial B ; \mathbb{Q})$$

\vspace{0.1cm}
The triviality in $\mathbb{Q}$-cohomology gives $H^{1}(\partial P ; \mathbb{Q}) = 0 = H^{2}(\partial P ; \mathbb{Q})$ and $H^{1}(\partial B ; \mathbb{Q}) = 0 = H^{2}(\partial B ; \mathbb{Q})$, so both middle arrows are isomorphisms. From the first sequence, we can decompose the cohomology classes:

\begin{center}
    $K = K|_{V} + K|_{P}$ and $\omega = \omega|_{V} + \omega|_{P}$
\end{center}

As $B$ is a rational homology $4$-ball, $H^{2}( B ; \mathbb{Q}) = 0$ so the second sequence gives that classes $K_{X}$ and $\omega_{X}$ satisfy:

\begin{center}
    $K_{X} = K_{X}|_{V} = \phi_{V}^{*}(K|_{V})$ and $\omega_{X} = \omega_{X}|_{V} = \phi_{V}^{*}(\omega|_{V})$
\end{center}

where $\phi_{V}$ is the symplectomorphism from the beginning of section $4$. So:

\begin{center}
    $K_{X} \cdot \omega_{X} =  \phi_{V}^{*}(K|_{V}) \cdot \phi_{V}^{*}(\omega|_{V}) = \phi_{V}^{*}(K|_{V} \cdot \omega|_{V}) = K|_{V} \cdot \omega|_{V} = K \cdot \omega - K|_{P} \cdot \omega|_{P} $
\end{center}

\begin{center}
\begin{tabular}{|c|}
     \hline
     $K_{X} \cdot \omega_{X} = K \cdot \omega - K|_{P} \cdot \omega|_{P}$ \\
     \hline
\end{tabular}
\end{center}

The intersection matrix $M$ of the plumbing $P$ is defined by the intersections $[ u_{i} \cdot u_{j} ]$ as in Figure \ref{graphB}:

\[
M =
\begin{bmatrix}

-4 & 1 \\

1 & -3 & 1 & 1 & 1 \\

 & 1 & -4 \\
 
 & 1 & & -2 \\
 
 & 1 & & & -2 & 1 \\
 
 & & & & 1 & -2 & 1 \\
 
 & & & & & 1 & -3 & 1 \\
 
 & & & & & & 1 & -4 \\

\end{bmatrix}
\]

Let $\{\gamma_{i}\}_{i=1}^{8}$ be the basis of $H^{2}(P; \mathbb{Q})$ which is dual to the basis $\{u_{i}\}_{i=1}^{8}$, meaning $\gamma_{i}(u_{j}) = \delta_{ij}$. Then the intersections $[\gamma_{i} \cdot \gamma_{j}]$ are given by $[M^{-1}]_{ij}$:

\[
M^{-1} = -\frac{1}{512} \cdot
\begin{bmatrix}

153 & 100 & 25 & 50 & 72 & 44 & 16 & 4 \\

100 & 400 & 100 & 200 & 288 & 176 & 64 & 16 \\

25 & 100 & 153 & 50 & 72 & 44 & 16 & 4 \\

50 & 200 & 50 & 356 & 144 & 88 & 32 & 8 \\

72 & 288 & 72 & 144 & 576 & 352 & 128 & 32 \\

44 & 176 & 44 & 88 & 352 & 528 & 192 & 48 \\

16 & 64 & 16 & 32 & 128 & 192 & 256 & 64 \\

4 & 16 & 4 & 8  & 32 & 48 & 64 & 144

\end{bmatrix}
\]

\vspace{0.1cm}

From $K|_{P} = \sum_{i=1}^{8} (K|_{P}(u_{i})) \gamma_{i}$, and $K|_{P}(u_{i}) = K(u_{i}) = PD(K) \cdot u_{i}$, we have $K|_{P} = \sum_{i=1}^{8} (PD(K) \cdot u_{i}) \gamma_{i}$. Taking the values of $u_{i}$'s from Table 2: $$K|_{P} = 2 \gamma_{1} + \gamma_{2} + 2 \gamma_{3} + \gamma_{7} + 2 \gamma_{8}$$

Analogously, we get $\omega|_{P} = \sum_{i=1}^{8} (PD(\omega) \cdot u_{i}) \gamma_{i}$:

\vspace{0.2cm}

$\omega|_{P} = (a - b_{4} - b_{5} - b_{6} - b_{7} - b_{13}) \gamma_{1} + (a - b_{1} - b_{2} - b_{3} - b_{14}) \gamma_{2} + (a - b_{7} - b_{9} - b_{11} - b_{15} - b_{16}) \gamma_{3} + (b_{2}-b_{12}) \gamma_{4} + (a - b_{4} - b_{8} - b_{9}) \gamma_{5} + (b_{4} - b_{13}) \gamma_{6} + (2a - b_{1} - b_{2} - b_{4} - b_{7} - b_{9} - b_{10} - b_{12}) \gamma_{7} + (2a - b_{1} - b_{3} - b_{5} - b_{6} - b_{8} - b_{9} - b_{10} - b_{11}) \gamma_{8}$

\vspace{0.2cm}

After calculating $K|_{P} \cdot \omega|_{P}$, we use $K_{X} \cdot \omega_{X} = K \cdot \omega - K|_{P} \cdot \omega|_{P}$ to get:

\vspace{0.2cm}

$K_{X} \cdot \omega_{X} = 5.625 a - 2.5b_{1} - 0.875 b_{2} -1.5 b_{3} -1.1875 b_{4} - 0.6875 b_{5} - 0.6875 b_{6}- 1.875 b_{7} - 1.25 b_{8} - 3.1875 b_{9} - 0.75 b_{10} - 0.6875 b_{11}- 0.875 b_{12} - 1.1875 b_{13} - 0.75 b_{14} + 0.0625 b_{15} + 0.0625 b_{16}$

\vspace{0.2cm}

We have $K_{X} \cdot \omega_{X} > 0$ because $a$ is positive and we can choose $b_{i}$'s to be arbitrarily small. If $X$ was diffeomorphic to $\mathbb{CP}^{2} \# 8 \overline{\mathbb{CP}^{2}}$, Lemma \ref{signLemma} would imply $K_{X} \cdot \omega_{X} < 0$ so this concludes that $X$ is exotic.
\end{proof}

\section{Exotic $\mathbb{CP}^{2} \# 9 \overline{\mathbb{CP}^{2}}$ via a graph from class $\mathcal{C}^{4}$}
\label{section_example2}

In this section we construct a different plumbing from the one in section \ref{section_example1}, again starting with the construction in section \ref{section_incidencegraph}. We keep the notation of some auxiliary objects as in the previous sections to simplify the exposition. Apart from the construction of the plumbing, all calculations are similar so we only emphasize the differences.

Starting from the incidence graph in Figure \ref{incidenceGraph}, in Figure \ref{KlasaC} we highlight nodes and edges which will form the required subgraph from $\mathcal{C}^{4}$.

\begin{figure}[H]
    \includegraphics[width=1\textwidth]{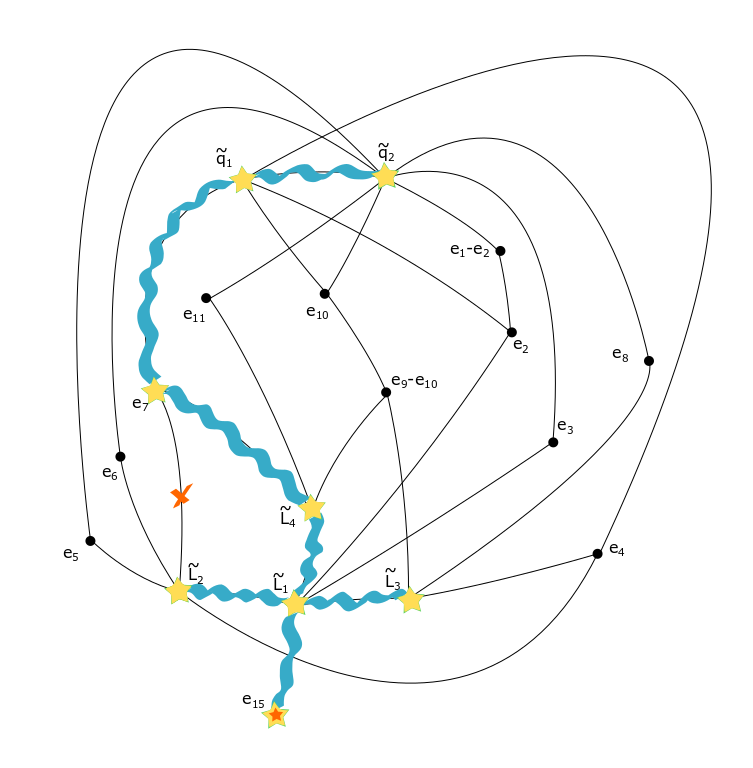}
    \centering
    \caption{Yellow stars are vertices and blue curly lines are edges which form a subgraph from class $\mathcal{C}^{4}$ presented in Figure \ref{graphC}. Note that $e_{15}$ is a new vertex compared to the starting Figure \ref{incidenceGraph}, marked with a smaller orange star because it comes from a new blow up. To arrive to an embedding, orange X shows which intersection to blow up. Some additional blow ups used for adjusting the self-intersections to match the vertex markings in Figure \ref{graphC} are not visible here but are described in the main text.}
    \label{KlasaC}
\end{figure}

We first blow up the intersection between $e_{7}$ and $\widetilde{L_{2}}$ and denote the exceptional curve by $e_{12}$. This way the proper transform of $\widetilde{L_{2}}$ gets self-intersection $-4$. With two further blow ups of different generic points of $\widetilde{L_{2}}$, we transform it into a curve of self-intersection $-6$, getting curves $e_{13}$ and $e_{14}$ in the process. Then blow up a generic point of the curve $\widetilde{L_{1}}$ getting $e_{15}$, and setting the self-intersection of the proper transform of $\widetilde{L_{1}}$ to $-3$. Now blow up a generic point of $e_{15}$, allowing its self-intersection to drop to $-2$, and name the exceptional curve $e_{16}$. Lastly, blow up a generic point of $\widetilde{L_{3}}$ dropping its self-intersection to $-3$ via the curve $e_{17}$.

Denote the classes by $v_{1} = e_{15}-e_{16}$, $v_{2} = \widetilde{L_{1}}-e_{15}$, $v_{3} = \widetilde{L_{3}} - e_{17}$, $v_{4} = \widetilde{L_{2}}-e_{12}-e_{13}-e_{14}$, $v_{5} = \widetilde{L_{4}}$, $v_{6} = e_{7} - e_{12}$, $v_{7} = \widetilde{q_{1}}$ and $v_{8} = \widetilde{q_{2}}$. These curves form the plumbing $Q$ embedded in $\mathbb{CP}^{2} \# 17 \overline{\mathbb{CP}^{2}}$, and its graph is presented in Figure \ref{graphC}. Therefore, the homology classes of spheres in the plumbing $Q$ are:

\begin{center}
\begin{tabular}{|c|}
  \hline
  $v_{1} = e_{15}-e_{16}$ \\
  $v_{2} = h - e_{1} - e_{2} - e_{3} - e_{15}$   \\
  $v_{3} = h - e_{4} - e_{8} - e_{9} - e_{17}$ \\
  $v_{4} = h - e_{4} - e_{5} - e_{6} - e_{7} -e_{12}-e_{13}-e_{14}$\\
  $v_{5} = h - e_{7} - e_{9} - e_{11}$ \\
  $v_{6} = e_{7} - e_{12}$ \\
  $v_{7} = 2h - e_{1} - e_{2} - e_{4} - e_{7} - e_{9} - e_{10}$ \\
  $v_{8} = 2h - e_{1} - e_{3} - e_{5} - e_{6} - e_{8} - e_{9} - e_{10} - e_{11}$ \\
  \hline
\end{tabular}
\end{center}

\begin{center}
\small{Table 3: Homology classes of spheres of the plumbing $Q$}
\end{center}

\begin{figure}[H]
    \includegraphics[width=1\textwidth]{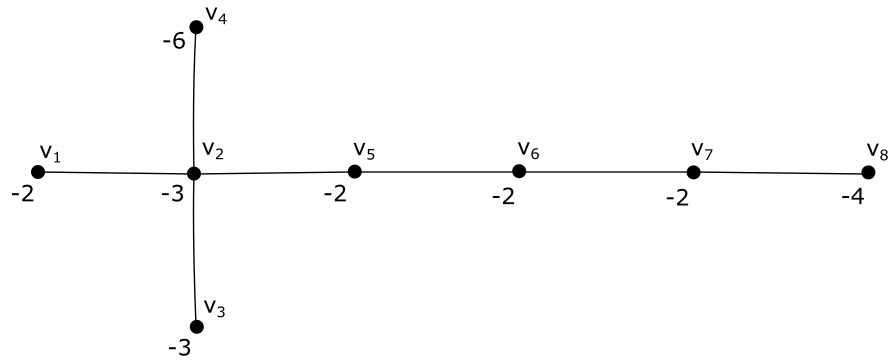}
    \centering
    \caption{Plumbing graph $Q$ from class $\mathcal{C}^{4}$}
    \label{graphC}
\end{figure}

\noindent
We can rationally blow down $Q$ by \cite{weighted} and get the manifold:

$$Y = (\mathbb{CP}^{2} \# 17 \overline{\mathbb{CP}^{2}} - intQ) \cup D$$

\vspace{0.1cm}

\noindent
where $D$ is a different rational homology ball than the one from section \ref{section_example1}. Details are very similar to the ones in the previous section and we only emphasize the differences, showing this time:

\begin{theorem}
\label{Th2}
$Y$ is homeomorphic but not diffeomorphic to $\mathbb{CP}^{2} \# 9 \overline{\mathbb{CP}^{2}}$.
\end{theorem}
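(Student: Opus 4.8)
The plan is to run the two-part argument of Theorem~\ref{Th1} with $P$ replaced by $Q$, the ambient $\mathbb{CP}^{2}\#16\overline{\mathbb{CP}^{2}}$ by $\mathbb{CP}^{2}\#17\overline{\mathbb{CP}^{2}}$, and $B$ by $D$; write $W=\mathbb{CP}^{2}\#17\overline{\mathbb{CP}^{2}}-\mathrm{int}\,Q$, so $\mathbb{CP}^{2}\#17\overline{\mathbb{CP}^{2}}=W\cup Q$ and $Y=W\cup D$. For the homeomorphism type I would first compute
\[
\chi(Y)=\chi(\mathbb{CP}^{2}\#17\overline{\mathbb{CP}^{2}})-\chi(Q)+\chi(D)=20-9+1=12,\qquad \sigma(Y)=-16-(-8)+0=-8;
\]
as $-8$ is not divisible by $16$, Rokhlin's theorem forces the intersection form of $Y$ to be odd, so $Y$ shares $\chi$, $\sigma$ and parity with $\mathbb{CP}^{2}\#9\overline{\mathbb{CP}^{2}}$. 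Then I would prove $Y$ is simply connected by repeating the three Van Kampen arguments of Lemmas~\ref{prviSeifert}--\ref{simple1}: the graph of $Q$ is the star with central weight $-3$ carrying the three one-vertex legs $[-2]$, $[-3]$, $[-6]$ and the chain $[-2,-2,-2,-4]$, so by \cite[Theorem 5.1]{seifert} $\partial Q$ is Seifert fibered over $S^{2}$ with one exceptional fiber per leg, and by \cite[Theorem 6.1]{seifertLectures} $\pi_{1}(\partial Q)$ has a presentation with generators $h,q_{0},q_{1},q_{2},q_{3},q_{4}$, relations $[h,q_{i}]=1$, $q_{0}q_{1}q_{2}q_{3}q_{4}=1$ and, for each $j$, a relation $q_{j}^{\alpha_{j}}h^{\beta_{j}}=1$ with $(\alpha_{0},\beta_{0})=(1,3)$ and $(\alpha_{1},\beta_{1})=(2,1)$, where $q_{1},q_{2},q_{3}$ may be chosen as normal circles to the one-vertex-leg spheres $v_{1},v_{3},v_{4}$. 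Since the exceptional spheres of classes $e_{16},e_{17},e_{13}$ satisfy $e_{16}\cdot v_{1}=e_{17}\cdot v_{3}=e_{13}\cdot v_{4}=1$ and meet no other sphere of $Q$, each of the normal circles of $v_{1},v_{3},v_{4}$ bounds a disk in $W$, so $i_{*}(q_{1})=i_{*}(q_{2})=i_{*}(q_{3})=1$; then $q_{1}^{2}h=1$ gives $i_{*}(h)=1$, $q_{0}h^{3}=1$ gives $i_{*}(q_{0})=1$, and $q_{0}q_{1}q_{2}q_{3}q_{4}=1$ gives $i_{*}(q_{4})=1$, so $i_{*}\colon\pi_{1}(\partial Q)\to\pi_{1}(W)$ is trivial. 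As in Lemma~\ref{simple1}, $\pi_{1}(Q)=1$ and the absence of $1$-handles in $\mathbb{CP}^{2}\#k\overline{\mathbb{CP}^{2}}$ give $\pi_{1}(W)=1$ by Van Kampen, and surjectivity of $\pi_{1}(\partial D)\to\pi_{1}(D)$ (as $D$ is a plumbing complement in some $\mathbb{CP}^{2}\#k\overline{\mathbb{CP}^{2}}$, see \cite[Section 8.1]{rationalConstruct} and \cite[pp.~1296--1297]{weighted}) gives $\pi_{1}(Y)=1$; Freedman's theorem then yields $Y\cong\mathbb{CP}^{2}\#9\overline{\mathbb{CP}^{2}}$.

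For exoticness, the symplectic form $\omega_{Y}$ arises from \cite[Theorem 1.1]{symplectic} just as $\omega_{X}$ did: the $v_{i}$ are symplectic as proper transforms of holomorphic curves and the graph of $Q$ is a negative-definite tree, so the neighbourhood of the plumbing may be replaced by $D$ symplectically, with a symplectomorphism $\phi_{W}$ identifying the two induced structures on $W$. Taking $\omega$ on $\mathbb{CP}^{2}\#17\overline{\mathbb{CP}^{2}}$ from Lemma~\ref{stark}, with $PD(\omega)=ah-\sum_{i=1}^{17}b_{i}e_{i}$ and $PD(K)=-3h+\sum_{i=1}^{17}e_{i}$, I would reproduce the argument of Proposition~\ref{propDifeo1} verbatim: $\partial Q$ is a rational homology sphere ($|H_{1}(\partial Q)|=|\det M_{Q}|=576$) and $H^{2}(D;\mathbb{Q})=0$, so the Mayer--Vietoris sequences for $\mathbb{CP}^{2}\#17\overline{\mathbb{CP}^{2}}=W\cup Q$ and $Y=W\cup D$ together with naturality of Chern classes give $K_{Y}\cdot\omega_{Y}=K\cdot\omega-K|_{Q}\cdot\omega|_{Q}$. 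To compute the last term I would dualize the $8\times 8$ intersection matrix $M_{Q}$ of $Q$, using $K(v_{i})=-2-v_{i}^{2}$ from adjunction and the numbers $PD(\omega)\cdot v_{i}$ read off Table~3; the upshot is $K_{Y}\cdot\omega_{Y}=\tfrac{45}{8}\,a+(\text{a linear form in }b_{1},\dots,b_{17})$, which is strictly positive once the $b_{i}$ are taken small enough, as allowed by Lemma~\ref{stark}. Since $m=9$ lies in the range of Lemma~\ref{signLemma}, a diffeomorphism $Y\cong\mathbb{CP}^{2}\#9\overline{\mathbb{CP}^{2}}$ would force $K_{Y}\cdot\omega_{Y}<0$, a contradiction, so $Y$ is not diffeomorphic to $\mathbb{CP}^{2}\#9\overline{\mathbb{CP}^{2}}$. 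Together with the previous paragraph this proves the theorem.

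Almost everything here is routine bookkeeping parallel to Section~\ref{section_example1} — in fact the cup-product coefficient $\tfrac{45}{8}$ comes out the same. The two places where care is really needed are extracting the Seifert data of $\partial Q$ from \cite{seifert} correctly and checking that every generator of $\pi_{1}(\partial Q)$ dies under $i_{*}$ — which is clean in this case because all three one-vertex legs are capped by pairwise disjoint exceptional spheres — and inverting $M_{Q}$ accurately so as to pin down the sign of $K_{Y}\cdot\omega_{Y}$; I expect this last computation to be the main thing to double-check.
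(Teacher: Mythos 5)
Your proposal is correct and follows essentially the same route as the paper's own proof (Propositions \ref{propHomeo2} and \ref{propDifeo2}): same $\chi$, $\sigma$, Rokhlin/Freedman argument, the same Van Kampen scheme killing the Seifert generators via the exceptional spheres $e_{16}, e_{17}, e_{13}$ capping the three short legs, and the same symplectic computation $K_{Y}\cdot\omega_{Y}=K\cdot\omega-K|_{Q}\cdot\omega|_{Q}=\tfrac{45}{8}a+O(b_{i})>0$ contradicting Lemma \ref{signLemma}. The only cosmetic difference is the labeling of the Seifert generators (you kill $h$ via the $[-2]$-leg relation $q^{2}h=1$ rather than the paper's $q_{1}^{6}h=1$), which is immaterial.
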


\begin{proof}
Propositions \ref{propHomeo2} and \ref{propDifeo2} together will complete the proof.
\end{proof}

\subsection{The topology of $Y$}

In this example, the boundary $\partial Q $ is a Seifert fibered $3$-manifold \cite{seifert} with Seifert ivariant $\{ 0; (1, 3), (6, 1), (3, 1), (2, 1), (13, 10) \}$. Analagously to Lemma \ref{prviSeifert}, by \cite{seifertLectures} we have:

\begin{lemma}
\label{drugiSeifert}
$\pi_{1}(\partial Q)$ has a presentation given by generators $q_{0}, q_{1}, q_{2}, q_{3}, q_{4}, h$ and relations:
\begin{itemize}
    \item $q_{0}q_{1}q_{2}q_{3}q_{4} = 1$
    \item $[h, q_{i}] = 1$ for all $ i = 0,1,2,3,4$
    \item $q_{0}h^{3} = 1$, $q_{1}^{6}h = 1$, $q_{2}^{3}h = 1$, $q_{3}^{2}h = 1$, $q_{4}^{13}h^{10} = 1$
\end{itemize}
Furthermore, the classes of $q_{1}, q_{2}$ and $q_{3}$ can be chosen to be normal circles to spheres $v_{4}$, $v_{3}$ and $v_{1}$ respectively.
\end{lemma}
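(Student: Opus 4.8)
The plan is to run exactly the argument used for Lemma \ref{prviSeifert}, the only new input being the combinatorics of the plumbing graph $Q$. So the first step is to read off, from the classes in Table 3, the self-intersections and the incidence structure: a short computation in $H_2(\mathbb{CP}^2 \# 17 \overline{\mathbb{CP}^2})$ gives $v_2 \cdot v_2 = -3$ and shows that $v_2$ meets each of $v_1$, $v_3$, $v_4$ and $v_5$ in a single point, that $v_5 - v_6 - v_7 - v_8$ is a linear chain with weights $-2, -2, -2, -4$, and that there are no further intersections. Hence $Q$ is the star-shaped plumbing with central vertex $v_2$ of weight $-3$, three length-one arms of weights $-2$, $-3$, $-6$ sitting at $v_1$, $v_3$, $v_4$ respectively, and one length-four arm $v_5 - v_6 - v_7 - v_8$; this is the graph of Figure \ref{graphC} in class $\mathcal{C}^4$.

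Next I would apply \cite[Theorem 5.1]{seifert}: the boundary of a negative-definite star-shaped plumbing is Seifert fibered over $S^2$, and its Seifert invariants are computed from the Hirzebruch--Jung continued fractions of the arms. The three length-one arms give the pairs $(6,1)$, $(3,1)$, $(2,1)$ (attached at $v_4$, $v_3$, $v_1$), the central weight $-3$ accounts for the pair $(1,3)$, and the long arm contributes the pair $(\alpha,\beta)$ with $\alpha/\beta = [2,2,2,4]$, which evaluates to $13/10$. So the Seifert invariant is $\{0; (1,3), (6,1), (3,1), (2,1), (13,10)\}$, as claimed. Plugging this data into the standard presentation of the fundamental group of a Seifert fibered space over $S^2$ --- \cite[Theorem 6.1]{seifertLectures}: generators $q_0, q_1, q_2, q_3, q_4, h$ with $h$ central, $q_i^{\alpha_i} h^{\beta_i} = 1$, and $q_0 q_1 q_2 q_3 q_4 = 1$ --- immediately yields the relations in the statement.

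For the last sentence of the lemma I would argue exactly as in Lemma \ref{prviSeifert}. In the plumbing model a neighborhood of $v_2$ is a disk bundle over $S^2$, and a length-one arm attached at a leaf $u$ of weight $-b$ contributes a singular fiber of the Seifert fibration whose fiber direction is that of the $S^1$-bundle on the boundary of the normal disk bundle of $u$ --- that is, a normal circle of $u$; after an isotopy the corresponding generator is this normal circle. Since $v_4$, $v_3$, $v_1$ are precisely the leaves attached directly to $v_2$, this shows $q_1$, $q_2$, $q_3$ may be chosen as normal circles to $v_4$, $v_3$, $v_1$ respectively. (One could similarly take $q_4$ to be a normal circle to the outermost sphere $v_8$ of the long arm, but that is not needed.)

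The only delicate point is the bookkeeping in the first step --- correctly determining which of the classes in Table 3 actually intersect after the $17$ blow ups, and with what multiplicity --- together with matching the orientation and continued-fraction conventions of \cite{seifert} and \cite{seifertLectures}; the evaluation $[2,2,2,4] = 13/10$ of the long arm's continued fraction is the single genuine computation, and everything else is formally identical to the $\mathcal{B}^4$ case treated in Lemma \ref{prviSeifert}.
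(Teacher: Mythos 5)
Your proposal is correct and takes essentially the same route as the paper: read the star-shaped graph of $Q$ off Table 3, convert it to the Seifert invariant $\{0; (1,3), (6,1), (3,1), (2,1), (13,10)\}$ via \cite[Theorem 5.1]{seifert} (with the only real computation being $[2,2,2,4]=13/10$ for the long arm), and plug this into the presentation of \cite[Theorem 6.1]{seifertLectures}. Your identification of $q_{1}, q_{2}, q_{3}$ with normal circles of the leaves $v_{4}, v_{3}, v_{1}$, justified through the fibered solid tori of the length-one arms, is exactly the claim the paper asserts (without further argument), so the approaches coincide.
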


\begin{lemma}
\label{trivial2}
$i_{*}(\pi_{1}(\partial Q))$ is trivial.
\end{lemma}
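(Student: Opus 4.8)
# Proof Proposal for Lemma \ref{trivial2}

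The plan is to mimic the argument used in the proof of Lemma \ref{trivial1}, exploiting the specific geometry visible in Figure \ref{KlasaC} together with the Seifert presentation from Lemma \ref{drugiSeifert}. Recall that for a sphere in the plumbing which is met transversely in a single point by some other embedded sphere lying in the complement $V' = \mathbb{CP}^{2} \# 17 \overline{\mathbb{CP}^{2}} - intQ$, the corresponding normal circle bounds a disk in $V'$, so its class dies under $i_{*}$. First I would locate such a sphere for one of the distinguished generators $q_1, q_2, q_3$. The natural candidate is $q_3$, the normal circle to $v_1 = e_{15} - e_{16}$: the exceptional curve $e_{16}$ meets $v_1$ transversely in one point and is disjoint from the rest of the plumbing $Q$ (it arises from a generic blow up on $e_{15}$), so $i_*(q_3) = 1$. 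Then the relation $q_3^2 h = 1$ gives $i_*(h) = 1$, and $q_0 h^3 = 1$ gives $i_*(q_0) = 1$.

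Next I would kill $i_*(q_2)$, the normal circle to $v_3 = \widetilde{L_3} - e_{17}$. Here I expect to use $e_{17}$, which is the exceptional curve of a generic blow up on $\widetilde{L_3}$: it meets $v_3$ in a single transverse point and is otherwise disjoint from $Q$, so again the normal circle bounds a disk in $V'$ and $i_*(q_2) = 1$. At this stage the only surviving relations of substance are $q_0 q_1 q_2 q_3 q_4 = 1$, which reduces to $i_*(q_1 q_4) = 1$, together with $q_1^6 h = 1$ and $q_4^{13} h^{10} = 1$, which with $i_*(h)=1$ become $i_*(q_1)^6 = 1$ and $i_*(q_4)^{13} = 1$. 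From $i_*(q_4) = i_*(q_1)^{-1}$ we get $i_*(q_1)^{13} = 1$; combined with $i_*(q_1)^6 = 1$ and $\gcd(6,13) = 1$ this forces $i_*(q_1) = 1$, hence $i_*(q_4) = 1$ as well, and the map is trivial.

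The step I expect to be the main obstacle is verifying that the chosen exceptional curves ($e_{16}$ for $v_1$, and $e_{17}$ for $v_3$) really do intersect only the intended plumbing sphere and nothing else in $Q$ — i.e.\ confirming from the homology classes in Table 3 that $e_{16} \cdot v_j = \delta_{1j}$ and $e_{17} \cdot v_j = \delta_{3j}$ for all $j$ — and, more subtly, that the relevant normal circle can genuinely be capped off by a disk inside $V'$ rather than merely inside $\mathbb{CP}^{2} \# 17 \overline{\mathbb{CP}^{2}}$; this is where one uses that $e_{16}$ (resp.\ $e_{17}$) minus its intersection point with the plumbing is a disk lying in the complement. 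If for some reason one of these curves does not cleanly work, the fallback is the trick from Lemma \ref{trivial1}: find two plumbing spheres (such as $v_1$ and $v_3$, or two spheres both meeting a common sphere disjoint from the rest of $Q$) whose normal circles cobound an annulus in $V'$, yielding a relation $i_*(q_i) = i_*(q_j)$ that, fed into the group relations, still collapses everything. I would then record the final sentence: since $i_*(q_0) = i_*(q_1) = i_*(q_2) = i_*(q_3) = i_*(q_4) = i_*(h) = 1$, the homomorphism $i_*$ is trivial.
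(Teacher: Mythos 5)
Your proof is correct and takes essentially the same approach as the paper: kill the normal circles of plumbing leaves by capping them off with exceptional spheres disjoint from the rest of $Q$, then propagate triviality through the Seifert relations of Lemma \ref{drugiSeifert}. The only difference is cosmetic — the paper also kills $i_*(q_1)$ directly, since the leaf $v_4 = \widetilde{L_2} - e_{12} - e_{13} - e_{14}$ meets the generic exceptional spheres $e_{13}$ and $e_{14}$, which renders your (valid) $\gcd(6,13)=1$ argument for $q_1$ and $q_4$ unnecessary.
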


\begin{proof}
In this case, compared to the previous section, it is easier to deduce the triviality of $i_{*}(\pi_{1}(\partial Q))$, as we made a lot of generic blow ups. More precisely, each of the three leaves of the plumbing graph $Q$ in Figure \ref{graphC}, that is $v_{4}$, $v_{3}$ and $v_{1}$, is intersecting a different exceptional sphere otherwise disjoint from the plumbing. As in the proof of Lemma \ref{trivial1}, the normal circles can be contracted in the complement of $Q$, so we can deduce $i_{*}(q_{1}) = 1$, $i_{*}(q_{2}) = 1$ and $i_{*}(q_{3}) = 1$. From $q_{1}^{6}h = 1$, we get $i_{*}(h) = 1$ and then $q_{0}h^{3} = 1$ implies $i_{*}(q_{0}) = 1$. The first relation of Lemma \ref{drugiSeifert} now gives $i_{*}(q_{4}) = 1$ and concludes that $i_{*}(\pi_{1}(\partial Q))$ is a trivial group.
\end{proof}

\begin{lemma}
\label{simple2}
$Y$ is simply connected.
\end{lemma}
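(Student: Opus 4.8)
The plan is to mirror the proof of Lemma \ref{simple1} almost verbatim, since the only ingredient that changed is the replacement of the plumbing $P$ by $Q$, the rational homology ball $B$ by $D$, and the triviality statement Lemma \ref{trivial1} by Lemma \ref{trivial2}. Concretely, $Y = V' \cup D$ where $V' = \mathbb{CP}^{2} \# 17 \overline{\mathbb{CP}^{2}} - intQ$ and $D$ is glued along $\partial Q$. First I would apply Van Kampen's theorem to the decomposition $\mathbb{CP}^{2} \# 17 \overline{\mathbb{CP}^{2}} = V' \cup Q$: the plumbing $Q$ is simply connected because it is homotopy equivalent to a wedge of spheres, and $\mathbb{CP}^{2} \# 17 \overline{\mathbb{CP}^{2}}$ is simply connected because it has a handle decomposition without $1$-handles, so from $1 = \pi_{1}(V') \big/ i_{*}(\pi_{1}(\partial Q))$ together with Lemma \ref{trivial2} we conclude $\pi_{1}(V')$ is trivial.

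Next I would handle the gluing of the rational homology ball. Writing $j: \partial D \hookrightarrow D$ for the inclusion, Van Kampen gives $\pi_{1}(Y) = \pi_{1}(V') *_{N} \pi_{1}(D)$ with $N = \langle i_{*}(x) j_{*}(x)^{-1} \mid x \in \pi_{1}(\partial D)\rangle$; since $\pi_{1}(V')$ is trivial this reduces to $\pi_{1}(Y) = \pi_{1}(D) \big/ \langle j_{*}(x) \mid x \in \pi_{1}(\partial D)\rangle$. To finish I would invoke the fact that $D$, like $B$ in the previous section, is constructed as the complement of a dual plumbing $Q'$ inside some $\mathbb{CP}^{2} \# k \overline{\mathbb{CP}^{2}}$ (by \cite[section 8.1]{rationalConstruct} and \cite[pp. 1296--1297]{weighted}); applying Van Kampen once more to $\mathbb{CP}^{2} \# k \overline{\mathbb{CP}^{2}} = D \cup Q'$ yields $1 = \pi_{1}(D) \big/ \langle j_{*}(x) \mid x \in \pi_{1}(\partial D)\rangle$, hence $\pi_{1}(Y)$ is trivial.

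There is essentially no obstacle here: the argument is the same three-fold application of Van Kampen's theorem already carried out in Lemma \ref{simple1}, and all the work specific to this configuration has been isolated into Lemma \ref{trivial2}. The only point requiring a sentence of care is confirming that the rational homology ball $D$ attached in the class $\mathcal{C}^{4}$ case is again of the ``complement of a plumbing'' type so that the surjectivity of $j_{*}$ applies; this is immediate from the cited construction in \cite{weighted, rationalConstruct}, which produces all these balls uniformly as smoothings of the relevant normal surface singularities. So the proof is a short bookkeeping exercise referencing Lemma \ref{trivial2} in place of Lemma \ref{trivial1}.
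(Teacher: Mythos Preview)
Your proposal is correct and matches the paper's approach exactly: the paper's own proof simply states that, using Lemma~\ref{trivial2} in place of Lemma~\ref{trivial1}, the argument is analogous to the proof of Lemma~\ref{simple1}. You have spelled out that analogy in detail, including the same three applications of Van Kampen's theorem and the same justification for the surjectivity of $j_{*}$ via the construction of $D$ as a complement of a dual plumbing.
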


\begin{proof}
Using Lemma \ref{trivial2} instead of Lemma \ref{trivial1}, the proof is analogous to the proof of Lemma \ref{simple1}.
\end{proof}

\begin{proposition}
\label{propHomeo2}
$Y$ is homeomorphic to $\mathbb{CP}^{2} \# 9 \overline{\mathbb{CP}^{2}}$.
\end{proposition}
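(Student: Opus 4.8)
The plan is to repeat, almost verbatim, the argument of Proposition \ref{propHomeo1}: compute the Euler characteristic, signature, and parity of the intersection form of $Y$, check that they coincide with those of $\mathbb{CP}^{2} \# 9 \overline{\mathbb{CP}^{2}}$, and then invoke Freedman's theorem \cite{freedman} together with the simple connectivity established in Lemma \ref{simple2}. All of the substantive work (that the symplectic rational blowdown along $Q$ is well defined, and that $Y$ is simply connected) has already been carried out above, so this proposition is purely a bookkeeping computation.

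First I would compute $\chi(Y)$ and $\sigma(Y)$ using additivity of these invariants under the cut-and-paste $Y = (\mathbb{CP}^{2} \# 17 \overline{\mathbb{CP}^{2}} - \mathrm{int}\,Q) \cup D$. The plumbing graph $Q$ of Figure \ref{graphC} has eight vertices, so $Q$ is homotopy equivalent to a wedge of eight $2$-spheres and $\chi(Q) = 9$; since $Q$ is a negative definite tree by \cite{weighted}, $\sigma(Q) = -8$. The rational homology ball $D$ has $\chi(D) = 1$ and $\sigma(D) = 0$. Hence
$$\chi(Y) = \chi(\mathbb{CP}^{2} \# 17 \overline{\mathbb{CP}^{2}}) - \chi(Q) + \chi(D) = 20 - 9 + 1 = 12,$$
$$\sigma(Y) = \sigma(\mathbb{CP}^{2} \# 17 \overline{\mathbb{CP}^{2}}) - \sigma(Q) + \sigma(D) = -16 - (-8) + 0 = -8,$$
which match $\chi(\mathbb{CP}^{2} \# 9 \overline{\mathbb{CP}^{2}}) = 12$ and $\sigma(\mathbb{CP}^{2} \# 9 \overline{\mathbb{CP}^{2}}) = -8$.

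Next I would determine the parity of the intersection form. As $Y$ is simply connected by Lemma \ref{simple2}, $\chi(Y) = 12$ forces $b_{2}(Y) = 10$, and together with $\sigma(Y) = -8$ this yields $b_{2}^{+}(Y) = 1$ and $b_{2}^{-}(Y) = 9$. By Rokhlin's theorem \cite{rokhlin}, a smooth closed simply connected $4$-manifold with even intersection form has signature divisible by $16$; since $-8$ is not divisible by $16$, the intersection form of $Y$ must be odd, hence isomorphic to $\langle 1 \rangle \oplus 9\langle -1 \rangle$, the form of $\mathbb{CP}^{2} \# 9 \overline{\mathbb{CP}^{2}}$. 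Freedman's theorem then produces a homeomorphism $Y \cong \mathbb{CP}^{2} \# 9 \overline{\mathbb{CP}^{2}}$.

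I do not anticipate any genuine obstacle: the only points requiring care are the bookkeeping of $\chi$ and $\sigma$ for the plumbing $Q$ — in particular that it has eight vertices and is negative definite, so that $\chi(Q) = 9$ and $\sigma(Q) = -8$ — and the elementary observation that $-8$ is not a multiple of $16$, which is exactly what lets Rokhlin's theorem force the intersection form to be odd.
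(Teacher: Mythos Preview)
Your proof is correct and follows essentially the same approach as the paper: compute $\chi(Y)=12$ and $\sigma(Y)=-8$ by additivity, invoke Rokhlin's theorem to see the intersection form is odd, and conclude via Lemma~\ref{simple2} and Freedman's theorem. The only difference is that you spell out $b_{2}^{\pm}(Y)$ and identify the form explicitly as $\langle 1\rangle \oplus 9\langle -1\rangle$, which the paper leaves implicit.
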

\begin{proof}
As before we have:
$$\chi(Y) = \chi(\mathbb{CP}^{2} \# 17 \overline{\mathbb{CP}^{2}}) - \chi(Q) + \chi(D) = 20 - 9 + 1 = 12$$
$$ \sigma(Y) = \sigma(\mathbb{CP}^{2} \# 17 \overline{\mathbb{CP}^{2}}) - \sigma(Q) + \sigma(D) = -16 - (-8) = -8$$
$Y$ has an odd intersection form by Rohlkin's theorem \cite{rokhlin} and thus, all the invariants match the ones of $\mathbb{CP}^{2} \# 9 \overline{\mathbb{CP}^{2}}$. From Lemma \ref{simple2}, these $4$-manifolds are both simply connected, so by Freedman's theorem we get that they must be homeomorphic.
\end{proof}

\subsection{Exoticness of $Y$}

\begin{proposition}
\label{propDifeo2}
$Y$ is not diffeomorphic to $\mathbb{CP}^{2} \# 9 \overline{\mathbb{CP}^{2}}$.
\end{proposition}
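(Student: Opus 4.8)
The plan is to mirror the proof of Proposition \ref{propDifeo1} almost verbatim, exploiting the fact that the only thing that changed is the plumbing ($Q$ in place of $P$) and the ambient number of blow ups ($17$ instead of $16$). By Lemma \ref{ThD} and Lemma \ref{signLemma}, it suffices to produce a symplectic form $\omega_{Y}$ on $Y$ together with a compatible canonical class $K_{Y}$ for which $K_{Y}\cdot\omega_{Y}>0$: this contradicts the conclusion $K_{\overline\omega}\cdot\overline\omega<0$ of Lemma \ref{signLemma} for $\mathbb{CP}^{2}\#9\overline{\mathbb{CP}^{2}}$, so $Y$ cannot be diffeomorphic to it. The symplectic structure $\omega_{Y}$ comes, exactly as in section \ref{section_example1}, from \cite[Theorem 1.1]{symplectic}: the spheres $v_{1},\dots,v_{8}$ are proper transforms of complex curves hence symplectic, the graph $Q$ is a negative definite tree by \cite{weighted}, so the symplectic rational blowdown applies and yields $(Y,\omega_{Y})$ with a symplectomorphism $\phi_{V'}\colon (V',\omega_{Y}|_{V'})\to(V',\omega|_{V'})$, where $V'=\mathbb{CP}^{2}\#17\overline{\mathbb{CP}^{2}}-\mathrm{int}\,Q$ and $\omega$ is the standard form from Lemma \ref{stark}.

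First I would record that $\partial Q=-\partial D$ is a rational homology sphere: using the Seifert invariant of Lemma \ref{drugiSeifert} one checks $\tfrac31+\tfrac16+\tfrac13+\tfrac12+\tfrac{10}{13}\neq0$, or equivalently computes $H_{1}(\partial Q;\mathbb Z)$ from that presentation and sees it is finite. Then the two Mayer--Vietoris sequences for $\mathbb{CP}^{2}\#17\overline{\mathbb{CP}^{2}}=V'\cup Q$ and $Y=V'\cup D$, together with $H^{1}(\partial Q;\mathbb Q)=H^{2}(\partial Q;\mathbb Q)=0$ and $H^{2}(D;\mathbb Q)=0$, give the restriction splittings $K=K|_{V'}+K|_{Q}$, $\omega=\omega|_{V'}+\omega|_{Q}$ and the identifications $K_{Y}=\phi_{V'}^{*}(K|_{V'})$, $\omega_{Y}=\phi_{V'}^{*}(\omega|_{V'})$. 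Naturality of the cup product under $\phi_{V'}^{*}$ then yields the master formula
\[
K_{Y}\cdot\omega_{Y}=K\cdot\omega-K|_{Q}\cdot\omega|_{Q}.
\]

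The computational step is to evaluate the right-hand side. With $PD(\omega)=ah-\sum_{i=1}^{17}b_{i}e_{i}$ and $PD(K)=-3h+\sum_{i=1}^{17}e_{i}$ from Lemma \ref{stark}, one has $K\cdot\omega=-3a+\sum_{i=1}^{17}b_{i}$, and $K|_{Q}=\sum_{i=1}^{8}(PD(K)\cdot v_{i})\gamma_{i}$, $\omega|_{Q}=\sum_{i=1}^{8}(PD(\omega)\cdot v_{i})\gamma_{i}$, where $\{\gamma_{i}\}$ is the basis of $H^{2}(Q;\mathbb Q)$ dual to $\{v_{i}\}$ and the Gram matrix of the $\gamma_{i}$ is the inverse of the intersection matrix $N=[v_{i}\cdot v_{j}]$ read off from Figure \ref{graphC}. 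Plugging in the classes from Table 3 gives $K|_{Q}$ with small integer coefficients and $\omega|_{Q}$ with coefficients linear in $a$ and the $b_{i}$; substituting into the master formula produces $K_{Y}\cdot\omega_{Y}$ as $(\text{positive rational})\cdot a$ plus a linear combination of the $b_{i}$ with bounded coefficients. Since Lemma \ref{stark} lets us take every $b_{i}$ arbitrarily small while keeping $a$ fixed and positive, this is strictly positive, giving the contradiction.

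The only genuine obstacle is bookkeeping accuracy: inverting the $8\times8$ matrix $N$ and tracking the (up to seventeen) $b_{i}$-coefficients through $K|_{Q}\cdot\omega|_{Q}$ without sign or indexing errors. Conceptually nothing new happens beyond section \ref{section_example1}; one should double-check that $N$ is indeed negative definite (so $N^{-1}$ exists and the argument is valid) and that the coefficient of $a$ in $K_{Y}\cdot\omega_{Y}$ is positive — the latter is exactly the statement that the ``a-part'' of $K\cdot\omega-K|_{Q}\cdot\omega|_{Q}$ does not vanish, which it cannot, since otherwise one could not distinguish $Y$ from the standard manifold by this method. Given the parallel structure, I would state the analogues of Lemmas \ref{prviSeifert}--\ref{simple1} only where they differ (Lemma \ref{drugiSeifert}, Lemma \ref{trivial2}, Lemma \ref{simple2}, already in the excerpt) and then carry out the single displayed computation above.
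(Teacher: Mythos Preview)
Your plan is exactly the paper's proof: same symplectic setup via Lemma \ref{stark} and \cite{symplectic}, same rational-homology-sphere check on $\partial Q$, same Mayer--Vietoris splitting, same master formula $K_{Y}\cdot\omega_{Y}=K\cdot\omega-K|_{Q}\cdot\omega|_{Q}$, and the same reduction to showing the $a$-coefficient is positive so that small $b_{i}$ finish the job.

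One caution: your final remark that the $a$-coefficient ``cannot'' vanish (or be nonpositive) ``since otherwise one could not distinguish $Y$ from the standard manifold by this method'' is circular. We do not know in advance that $Y$ is exotic; that is precisely the conclusion at stake. The positivity of the $a$-coefficient is not guaranteed by any general principle and must be verified by the explicit computation you outlined. In the paper this computation is carried out: with $K|_{Q}=\gamma_{2}+\gamma_{3}+4\gamma_{4}+2\gamma_{8}$ and the inverse matrix $N^{-1}$, one gets $K_{Y}\cdot\omega_{Y}=5.625\,a+(\text{bounded linear in the }b_{i})$, and $5.625>0$ is the actual content that makes the argument go through. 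So your outline is correct, but the last heuristic should be replaced by the numerical check rather than invoked as a reason.
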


\begin{proof}

The proof is essentially the same as the proof of Proposition \ref{propDifeo1}. Start by introducing a symplectic form on $\mathbb{CP}^{2} \# 17 \overline{\mathbb{CP}^{2}}$ using Lemma \ref{stark}:

\begin{center}
    $ PD(\omega) = ah - b_{1}e_{1} -...-b_{17}e_{17}$
\end{center}

This time, let $K$ be the standard canonical class of $\mathbb{CP}^{2} \# 17 \overline{\mathbb{CP}^{2}}$:

\begin{center}
    $PD(K) = -3h + e_{1} +...+ e_{17}$
\end{center}

From these two we have:
\begin{center}
    $K \cdot \omega = -3a + b_{1} +...+ b_{17}$
\end{center}

\vspace{0.1cm}

The intersection matrix of the plumbing $Q$ is $[v_{i} \cdot v_{j}]$:

\[
N =
\begin{bmatrix}

-2 & 1 \\

1 & -3 & 1 & 1 & 1 \\

 & 1 & -3 \\
 
 & 1 & & -6 \\
 
 & 1 & & & -2 & 1 \\
 
 & & & & 1 & -2 & 1 \\
 
 & & & & & 1 & -2 & 1 \\
 
 & & & & & & 1 & -4 \\

\end{bmatrix}
\]

\vspace{0.2cm}

The intersection matrix of the basis $\{\gamma_{i}\}_{i=1}^{8}$ dual to $\{v_{i}\}_{i=1}^{8}$ is:

\[
N^{-1} = -\frac{1}{576} \cdot
\begin{bmatrix}

405 & 234 & 78 & 39 & 180 & 126 & 72 & 18 \\

234 & 468 & 156 & 78 & 360 & 252 & 144 & 36 \\

78 & 156 & 244 & 26 & 120 & 84 & 48 & 12 \\

39 & 78 & 26 & 109 & 60 & 42 & 24 & 6 \\

180 & 360 & 120 & 60 & 720 & 504 & 288 & 72 \\

126 & 252 & 84 & 42 & 504 & 756 & 432 & 108 \\

72 & 144 & 48 & 24 & 288 & 432 & 576 & 144 \\

18 & 36 & 12 & 6 & 72 & 108 & 144 & 180

\end{bmatrix}
\]

\vspace{0.2cm}

To calculate $K_{Y} \cdot \omega_{Y}$, we can aquire $K|_{Q}$ and $\omega|_{Q}$ decomposing the second cohomology classes as before. Again, this is possible because the boundary manifold $\partial Q$ is Seifert fibered and $\frac{3}{1} + \frac{1}{6} + \frac{1}{3} + \frac{1}{2} + \frac{10}{13} \neq 0$, so it is a rational homology sphere (see \cite{saveliev}). $K|_{Q} = \sum_{i=1}^{8} (PD(K) \cdot v_{i}) \gamma_{i}$ so using the values of $PD(K)$ and $v_{i}$'s from Table 3:

\begin{center}
$K|_{Q} = \gamma_{2} + \gamma_{3} + 4 \gamma_{4} + 2 \gamma_{8}$
\end{center}

A similar formula $\omega|_{Q} = \sum_{i=1}^{8} (PD(\omega) \cdot v_{i}) \gamma_{i}$ gives: 

\vspace{0.2cm}

$\omega|_{Q} = (b_{15} - b_{16}) \gamma_{1} + (a - b_{1} - b_{2} - b_{3} - b_{15}) \gamma_{2} + (a - b_{4} - b_{8} - b_{9} - b_{17}) \gamma_{3} + (a - b_{4} - b_{5} - b_{6} - b_{7} - b_{12} - b_{13} - b_{14}) \gamma_{4} + (a - b_{7} - b_{9} - b_{11}) \gamma_{5} + (b_{7} - b_{12}) \gamma_{6} + (2a - b_{1} - b_{2} - b_{4} - b_{7} - b_{9} - b_{10}) \gamma_{7} + (2a - b_{1} - b_{3} - b_{5} - b_{6} - b_{8} - b_{9} - b_{10} - b_{11}) \gamma_{8}$

\vspace{0.2cm}

And once again, from $K_{Y} \cdot \omega_{Y} = K \cdot \omega - K|_{Q} \cdot \omega|_{Q}$:

\vspace{0.2cm}

$K_{Y} \cdot \omega_{Y} = 5.625 a - 2.5 b_{1} - 1.75 b_{2} - 1.5 b_{3} -1.875 b_{4} - 0.708\overline{3} b_{5} - 0.708\overline{3} b_{6}- 1.208\overline{3} b_{7} - 0.\overline{6} b_{8} - 3.1\overline{6} b_{9} - 0.69791\overline{6} b_{10} - 1.25 b_{11} - 1.208\overline{3} b_{12} + 0.041\overline{6} b_{13} + 0.041\overline{6} b_{14} + 0.125 b_{15} + 0.125 b_{16} + 0.08\overline{3} b_{17}$

\vspace{0.2cm}

$K_{Y} \cdot \omega_{Y} > 0$ because $a$ is positive and $b_{i}$'s can be arbitrarily small. By Lemma \ref{signLemma}, this is impossible unless $Y$ is exotic.
\end{proof}

\begin{remark}
Finding interesting configurations of lines and quadrics could produce even smaller exotic $4$-manifolds via suitable rational blowdowns, so this is one upcoming challenge. It seems that the exoticness proof will remain true if enough curves from the initial configuration are used in the plumbing, so it would only remain to take care of simple connectedness.
\end{remark}

\bibliographystyle{abbrv}
\bibliography{sample}

\end{document}